\newcommand{\LA}{\left<}
\newcommand{\RA}{\right>}
\newcommand{\LB}{\left[}
\newcommand{\RB}{\right]}
\newcommand{\LV}{\left|}
\newcommand{\RV}{\right|}
\newcommand{\LC}{\left(}
\newcommand{\RC}{\right)}
\newcommand{\m}{\boldsymbol{m}}
\newcommand{\p}{\partial}
\newcommand{\f}{\boldsymbol{f}}
\newcommand{\Stwo}{\mathbb{S}^2}
\newcommand{\e}{\epsilon}
\newcommand{\logep}{\log {1\over \e}}
\newcommand{\R}{\mathbb{R}}
\newcommand{\N}{\mathbb{N}}
\newcommand{\Z}{\mathbb{Z}}
\newcommand{\C}{\mathbb{C}}
\DeclareMathOperator*{\osc}{osc}
\newcommand{\BMO}{\mathrm{BMO}}
\DeclareMathOperator{\supp}{supp}
\newcommand{\loc}{\mathrm{loc}}
\newcommand{\scp}[2]{\left\langle #1, #2 \right\rangle}
\newcommand{\dd}[2]{\frac{\partial #1}{\partial #2}}
\newcommand{\set}[2]{\left\{ #1 \, \colon \ #2 \right\}}
\renewcommand{\div}{\mathop{\mathrm{div}}\nolimits}
\newcommand{\curl}{\mathop{\mathrm{curl}}\nolimits}
\newcommand{\restr}{\mathchoice
{\kern2pt\mbox{\vrule width 0.08ex height1.5ex depth0ex\kern-0.08ex\vrule width 1.5ex height.08ex depth0ex}\kern2pt}
{\kern2pt\mbox{\vrule width 0.08ex height1.5ex depth0ex\kern-0.08ex\vrule width 1.5ex height.08ex depth0ex}\kern2pt}
{\kern1.5pt\mbox{\vrule width 0.06ex height1.1ex depth0ex\kern-0.06ex\vrule width 1.1ex height.06ex depth0ex}\kern1.5pt}
{\kern1pt\mbox{\vrule width 0.04ex height0.75ex depth0ex\kern-0.04ex\vrule width 0.75ex height.04ex depth0ex}\kern1pt}
}
\newtheorem{theorem}{Theorem}[section]
\newtheorem{lemma}{Lemma}[section]
\theoremstyle{remark}
{}
\newtheorem*{remark}{Remark}
\begin{document}
\title[Excess energy and LLG vortex dynamics]{Vortex dynamics in the presence of  excess energy for the Landau-Lifshitz-Gilbert equation}

\thanks{ M. Kurzke was supported by DFG SFB 611.  D.~Spirn was partially supported by NSF grant DMS-0955687.}

\author[M. Kurzke]{Matthias Kurzke}
\address{Institute for Applied Mathematics \\ University of Bonn \\ Endenicher Allee 60 \\ 53115 Bonn \\ Germany}
\email{kurzke@iam.uni-bonn.de}

\author[C. Melcher]{Christof Melcher}
\address{Department of Mathematics I \\ RWTH Aachen University \\52056 Aachen \\ Germany}
\email{melcher@rwth-aachen.de}

\author[R. Moser]{Roger Moser}
\address{Department of Mathematical Sciences \\ University of Bath \\ Bath BA2 7AY \\ United Kingdom}
\email{r.moser@bath.ac.uk}

\author[D. Spirn]{Daniel Spirn}
\address{School of Mathematics \\ University of Minnesota \\ Minneapolis, MN 55455 \\ USA}
\email{spirn@math.umn.edu}

\date{\today}

\begin{abstract}
We study the Landau-Lifshitz-Gilbert equation for the dynamics of a magnetic vortex system.  
We present a PDE-based method for proving vortex dynamics  that 
does not rely on strong well-preparedness of the initial data and allows for instantaneous changes in the strength of the gyrovector force due to bubbling events.  The main tools are  estimates of the Hodge decomposition of the supercurrent and 
an analysis of the defect measure of weak convergence of the
 stress energy tensor. Ginzburg-Landau equations with mixed dynamics in the presence of excess energy are also discussed.

\end{abstract}

\maketitle

\section{Introduction}
Ferromagnets in a domain $\Omega \subset \R^3$ are typically modeled by a magnetization vector
$\m: \Omega \to \mathbb{S}^2$ with values on the unit 2-sphere.   Dynamically, the magnetization satisfies the 
Landau-Lifshitz-Gilbert (LLG) equation:
\begin{equation}  \label{preLLG}
{\p \m \over \p t} = \m \times \LC  \alpha {\p \m \over \p t} - \boldsymbol{h}_{\mathrm{eff}} \RC,
\end{equation}
where $\boldsymbol{h}_{\mathrm{eff}} $ is the effective  field, arising from the $L^2$ gradient of the 
micromagnetic energy $E_\e(\m)$, and $\alpha > 0$ is a damping parameter.  The  form of $E_\e(\m)$ depends heavily on the physics of the ferromagnetic sample and contains
a nonlocal pseudodifferential operator; however, for thin, isotropic materials the energy  simplifies.  

When the domain is thin, the magnetization vector lies mostly in the plane; consequently,   
$\m = (m_1, m_2, m_3) =: (m, m_3)$ behaves roughly  like $\m \approx (m, 0)$ where $m \in \mathbb{S}^1$.  
Furthermore,  constraining $\m$ onto the plane induces the formation of \emph{vortices}, points  $(a_1,\ldots,a_N) \in \Omega^N$,  about which the winding number of the planar component, $m$, is quantized.  In the center of each 
vortex, the micromagnetic vector $\m \approx (0,0, \pm1)$; and hence, such vortices carry 
both the $\mathbb{S}^1$-degree of the winding number and a \emph{polarity}, which may be interpreted as an $\mathbb{S}^2$-degree and is in simple
situations given by the value of $m_3$ at the center of the vortex.  Hence, micromagnetic vortices carry two pieces of information, whereas vortices arising 
in  superconductivity, superfluids, and Bose-Einstein condensates have only the winding number. 

The  micromagnetic energy and the local area element  concentrate at the site of the vortices $(a_1,\ldots,a_N)$.  In particular the local area element, described by the \emph{micromagnetic vorticity}, concentrates via a cover of a hemisphere at the site of a vortex.  The question of how   such concentrated quantities behave in \eqref{preLLG} has been a rich field of study.   Thiele \cite{Thiele73} and Huber \cite{Huber82} showed formally that in the dynamical setting
concentrations obey an equation of the form
\[
F_n + G_n \times \dot{a}_n - \alpha_0 \dot{a}_n  =0
\]
with an \emph{interaction force} $F_n = F_n(a_1, \ldots, a_N)$, a \emph{gyrovector force} $G_n = 4 q_n \mathbf{e}_3$, and a
\emph{damping term} $\alpha_0 \dot{a}_n$.

In a previous paper \cite{Kurzke-Melcher-Moser-Spirn:11}, the authors gave further justification
of this motion law by deriving it rigorously from the LLG equation, but only under strong assumptions
on the initial data and for boundary conditions, chosen mainly for technical rather than physical reasons. 
A similar problem with an additional spin transfer torque was studied in a different paper \cite{Kurzke-Melcher-Moser:11} by the first three authors.   The undamped problem was studied by Lin-Shatah \cite{Lin-Shatah:03} and later by Lin-Wei \cite{Lin-Wei:10}, who found traveling wave solutions for vortex pairs.  

Here we present a new approach that works under significantly weaker assumptions and for
other types of boundary conditions. While the previous work relied heavily on variational
methods and ideas of Lin-Xin \cite{Lin-Xin:99}, Colliander-Jerrard \cite{Colliander-Jerrard:99}, and Sandier-Serfaty \cite{Sandier-Serfaty:04.1,Sandier-Serfaty:04.2},
we now mostly use the structure of the PDE coming from \eqref{preLLG} and compensated
compactness arguments in the spirit of H\'elein \cite{Helein:02} and Lin-Rivi\`ere \cite{Lin-Riviere:01}.

\subsection{Mathematical setting and results} \label{results}

We use essentially the same terminology and notation as in our previous paper
\cite{Kurzke-Melcher-Moser-Spirn:11}.  Therefore, we keep their discussion brief.

We use the functional
\begin{equation}\label{energy}
E_\e(\m) = \int_\Omega e_\e(\m) \, dx
\end{equation}
with
\begin{equation}
e_\e(\m) = {1\over2} \LV \nabla \m \RV^2 + {1\over 2\e^2} m_3^2
\end{equation}
as a model for the micromagnetic energy. Formally when $\e$ is very small and the energy is not too large, then
$\m$ must take values close to the equator $\mathbb{S}^1 \times \{0\}$ in most of $\Omega$. For topological
reasons it may be forced to reach the poles at certain points, but the third component will
decay rapidly away from these points. In particular one expects a sequence of local minimizers of \eqref{energy} to converge 
to a map $\m_* = (m_*,0)$ away from a finite number of points $a_1,\ldots,a_N$ in $\Omega$.   

In the vicinity of each vortex the energy blows up at a rate $\pi \logep$ up to lower order terms.  Hang-Lin \cite{Hang-Lin:01},
making use of arguments of Bethuel-Brezis-H\'elein \cite{Bethuel-Brezis-Helein:94} for the associated Ginzburg-Landau energy, showed that 
as $\e\to 0$,
\[
E_\e(\m) = N \LC \pi \logep + \gamma \RC + W(a) + o(1),
\]
where $\gamma$ is a universal constant and 
\[
W(a) = - \pi \sum_{n \neq m} \log \LV a_n - a_m \RV + \hbox{boundary effects}
\]
is a \emph{renormalized energy} when the winding number about each vortex is one.  The renormalized energy is the Kirchoff-Onsager functional arising in Euler point vortices.

We allow winding numbers $\pm 1$ in this paper; therefore, we  consider pairs
$(a_n,d_n)$ of points $a_n \in \Omega$ and winding numbers $d_n = \pm 1$
for $n = 1,\ldots,N$. Writing $a = (a_1,\ldots,a_n)$ and $d = (d_1,\ldots,d_n)$, we
obtain a renormalized energy $W(a,d)$. This will also depend on the boundary
conditions, and we study Dirichlet as well as Neumann boundary data. As the
renormalized energy is the same as in the theory of Ginzburg-Landau vortices
and is discussed extensively elsewhere \cite{Bethuel-Brezis-Helein:94}, we do not write
down the details here.

Due to the blowup of energy at the site of each vortex, one expects that 
\[
{e_\e(\m_\e) \over \logep} \to \pi \sum_{n = 1}^N \delta_{a_n}
\]
in distribution when we have a family of maps with good properties. 
Apart from the energy density, another fundamental quantity associated to the micromagnetic vector $\m$ is the 
\emph{vorticity},
\[
\omega(\m) = \LA \m, {\p \m \over \p x_1} \times { \p \m \over \p x_2} \RA,
\]
which can be viewed as the Jacobian of the mapping $\m: \Omega \to \Stwo$; consequently, it measures the local signed area of the mapping.  If $\m$ is smooth on a disk $B_r(x)$ and $\left. \m  \right|_{\p B_r(x)}$ takes values on the equator $\mathbb{S}^1 \times \{0\}$ with
winding number $d$, then
\[
\int_{B_r(x)} \omega(\m) \, dx \in 2\pi d + 4 \pi \mathbb{Z}.
\]
Due to the ``relaxed constraint'' which induces $\m$ to lie mostly in $\mathbb{S}^1$, one finds that the vorticity will concentrate at a number of points,
each point giving rise to a concentration of the vorticity of the amount $2\pi d + 4\pi q$; namely the area of $d$ hemispheres plus the area of a number of ``bubbles'', or covers of $\mathbb{S}^2$.  Thus we expect
\[
\omega(\m_\e) \to 2\pi \sum_{n = 1}^N d_n \delta_{a_n} + 4\pi \sum_{n=1}^N \hat{q}_n \delta_{a_n} + 4 \pi \sum_{p=1}^P \tilde{q}_p \delta_{b_p},
\]
where $\hat{q}_n \in \mathbb{Z}$ correspond to bubbles at the vortices and $\tilde{q}_p \in \mathbb{Z}$ to bubbles elsewhere.
Under strict conditions on the amount of energy, one can show that  $\hat{q}_n = 0$ and $\tilde{q}_p =0$ for all $n$ and $p$, see \cite{Kurzke-Melcher-Moser-Spirn:11}.

In order to separate the winding numbers $d_n$ from the $\mathbb{S}^2$ degrees $q_n$, we also consider
the planar Jacobian
\[
J(m) = \dd{m_1}{x_1} \dd{m_2}{x_2} - \dd{m_1}{x_2} \dd{m_2}{x_1}, 
\]
and we expect that
\[
J(m_\e) \to \pi \sum_{n=1}^N d_n \delta_{a_n}
\]
as well. Studying the Jacobian also helps for the analysis since we can apply standard results
from the theory of Ginzburg-Landau vortices to $J(m)$.

The question of  how concentrations, described above, are moved by the  Landau-Lifshitz-Gilbert equation was considered by the authors in \cite{Kurzke-Melcher-Moser-Spirn:11}.
Using the model $E_\epsilon$ for the free energy and the abbreviation
\begin{equation} \label{L2-gradient}
\f_\epsilon(\m_\epsilon) = \Delta \m + \LV \nabla \m \RV^2 \m - {1\over \e^2} \LC m_3 \mathbf{e}_3 - m_3^2 \m \RC
\end{equation}
for the negative $L^2$ gradient, the equation takes the form
\begin{equation} \label{LLG}
\dd{\m_\epsilon}{t} =  \m_\epsilon \times \LC \alpha_\e \dd{\m_\epsilon}{t}  - \f_\epsilon(\m_\epsilon)  \RC,
\end{equation}
and the behavior of $\alpha_\epsilon$ for $\epsilon \to 0$ is crucial for the answer.
If it decays very slowly or not at all, then we expect no gyrovector force in the limit; on the other hand, if it decays
rapidly, then the gyrovector force will dominate and the damping term $\alpha_0 \dot{a}_n$ will
be invisible. The most interesting case is when both terms coexist. This is expected when $\alpha_\epsilon$
is of the order $1/|\log \epsilon|$. We assume that
\[
\alpha_\epsilon \log \frac{1}{\epsilon} \to \alpha_0 \quad \text{as } \epsilon \to 0.
\]
In the aforementioned paper \cite{Kurzke-Melcher-Moser-Spirn:11}, a combination of differential
identities for the time-evolution of $\omega(\m_\e)$ and $e_\e(\m_\e)$, 
along with special choices of test functions, yielded an ODE for the evolution of the vortex positions:
\begin{equation} \label{idealODE}
4 \pi q_n i \dot{a}_n + \pi \alpha_0 \dot{a}_n + {\p W \over \p a_n}(a,d) = 0,
\end{equation}
where $q_n = \pm \frac{1}{2}$. The proof of \eqref{LLG} in \cite{Kurzke-Melcher-Moser-Spirn:11} relies heavily on
the so-called well-preparedness of the initial data. That is, we define the \emph{excess energy} to be the difference
between the actual energy and an expression that describes asymptotically the energy needed to
develop the observed vortices. It is then assumed that the excess energy tends to $0$ as $\epsilon \to 0$.
Furthermore, the arguments require Dirichlet boundary data and
the assumption that all winding numbers $d_n = 2q_n$ are of the same sign. 


It is natural to ask what happens to concentrations in \eqref{LLG} when the initial data are not  well-prepared, and there are two important problems that occur.
The first issue is that we cannot prevent bubbling from occurring near vortices.  In \cite{Kurzke-Melcher-Moser-Spirn:11} the authors prevented this by strict energy control, which no longer holds.    Consequently,  the gyrovector value 
can spontaneously change.  The more troubling problem concerns the proof of the convergence
of the stress energy tensor to the stress energy tensor of the limiting canonical harmonic map $\m_*$ since energy bounds only imply weak convergence in $H^1$ away from the vortices.   To overcome the lack of strong convergence, the authors in \cite{Kurzke-Melcher-Moser-Spirn:11} prove bounds for $\int_{\Omega \backslash \cup B_{r}(a_n)} |\nabla \m_\e - \nabla \m_*|^2 \, dx$ in
terms of the excess energy; hence, if the initial data are well-prepared (i.e., the excess energy vanishes at the initial time), then $\m_\e$ converges strongly to $\m_*$ outside of the vortex cores.  Again this argument fails when the initial data are not well-prepared.

This paper develops  techniques to establish the vortex motion law of micromagnetic vortices in the presence of excess energy.
The arguments work when winding numbers of both signs are present and for both Dirichlet and Neumann boundary conditions.

Let $\boldsymbol{g} = (g,0) : \partial \Omega \to \mathbb{S}^1 \times \{0\}$ be a smooth map and consider smooth
initial data $\m_\epsilon^0 : \Omega \to \mathbb{S}^2$ with $\m_\epsilon^0|_{\partial \Omega} = \boldsymbol{g}$.
We assume that there exists a constant $C_0$ such that
\[
E_\epsilon(\m_\epsilon^0) \le \pi N \log \frac{1}{\epsilon} + C_0
\]
for every $\epsilon \in (0,\frac{1}{2}]$. Furthermore, we assume that there exist $a^0 = (a_1^0,\ldots,a_N^0) \in \Omega^N$
with $a_m \not= a_n$ for $m \not= n$, $d = (d_1,\ldots,d_N)$ with $d_n = \pm 1$, and $q^0 = (q_1^0,\ldots,q_N^0)$ with
$q_n^0 \in \frac{1}{2} + \Z$, such that
\[
\frac{e_\epsilon(\m_\epsilon^0)}{\log \frac{1}{\epsilon}} \to \pi \sum_{n = 1}^N \delta_{a_n^0}, \quad
J(m_\epsilon^0) \to \pi \sum_{n = 1}^N d_n \delta_{a_n^0}, \quad \text{and} \quad \omega(\m_\epsilon^0) \to 4\pi \sum_{n = 1}^N q_n^0 \delta_{a_n^0}
\]
in distribution.

These are the assumptions we use for the problem with Dirichlet boundary conditions. When
we study Neumann data, then we drop the condition $\m_\epsilon^0|_{\partial \Omega} = \boldsymbol{g}$,
and all of the other assumptions remain the same.

\begin{theorem} \label{vortexmotionlawLLG}
For every $\epsilon \in (0,\frac{1}{2}]$, there exists a weak solution $\m_\epsilon$
of \eqref{LLG}, with $\m_\e(0) = \m_0^\e$ and $\m(t)|_{\partial \Omega} = \boldsymbol{g}$ for all $t > 0$,
that is smooth up to finitely many points in space-time. Furthermore, there exist a number
$T > 0$, a piecewise constant function $q : (0,T) \to (\frac{1}{2} + \Z)^N$ with finitely
many jumps, and a sequence $\epsilon_k \to 0$ such that for almost every $t \in (0,T)$,
\[
\frac{e_{\epsilon_k}(\m_\epsilon(t))}{\log \frac{1}{\epsilon_k}} \to \pi \sum_{n = 1}^N \delta_{a_n(t)}, \quad
J(m_{\epsilon_k}(t)) \to \pi \sum_{n = 1}^N d_n \delta_{a_n(t)},
\]
and
\[
\omega(\m_{\epsilon_k}(t)) \to 2\pi \sum_{n = 1}^N d_n \delta_{a_n(t)} + 4\pi \sum_{n = 1}^N q_n(t) \delta_{a_n(t)},
\]
where $a = (a_1,\ldots,a_N)$ solves
\[
4\pi q_n i \dot{a}_n + \pi \alpha_0 \dot{a}_n + \dd{W}{a_n}(a,d) = 0, \quad n = 1,\ldots,N,
\]
in $(0,T)$ with $a(0) = a^0$.

The same statement holds for homogeneous Neumann instead of Dirichlet boundary conditions.
\end{theorem}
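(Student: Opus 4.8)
The plan is to construct energy-dissipating weak solutions, resolve the vortex concentrations at almost every time by Ginzburg--Landau Jacobian estimates, and then extract the motion law from the divergence structure of the equation, with the stress-energy tensor as the central object. For existence I would approximate \eqref{LLG} by penalizing the constraint $|\m|=1$ (a Ginzburg--Landau relaxation) and pass to the limit in the penalization parameter; the Gilbert structure yields the dissipation identity $\frac{d}{dt}E_\e(\m_\e)=-\alpha_\e\int_\Omega|\p_t\m_\e|^2\,dx$, so the hypothesis $E_\e(\m_\e^0)\le\pi N\logep+C_0$ propagates in time, and, since the leading self-energy $\pi N\logep$ is conserved, the total dissipation over $(0,T)$ is bounded by $C_0+o(1)$. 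A Struwe-type partial-regularity argument then produces a weak solution that is smooth away from finitely many space-time points, with the energy and dissipation bounds available uniformly in $\e$.

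Next I would resolve the concentrations for almost every $t$. Passing to a subsequence $\e_k\to0$, the energy bound together with the compactness and lower-bound theory for Ginzburg--Landau vortices gives, for a.e.\ $t$, concentration of the rescaled energy density at exactly $N$ points $a_n(t)$ and convergence $J(m_{\e_k}(t))\to\pi\sum_n d_n\delta_{a_n(t)}$; the winding numbers $d_n$ are preserved because the planar degree is stable under weak $H^1_{\loc}$ convergence away from the concentration set. The vorticity carries the additional sphere-degree, $\omega(\m_{\e_k}(t))\to 2\pi\sum_n d_n\delta_{a_n(t)}+4\pi\sum_n q_n(t)\delta_{a_n(t)}$ with $q_n(t)\in\frac12+\Z$. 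A bubbling event at a vortex changes the corresponding $q_n$ by an integer; since each such event is accompanied by the loss of a fixed quantum of energy while the total dissipation is bounded by $C_0+o(1)$, only finitely many jumps can occur, so $q$ is piecewise constant on a maximal interval $(0,T)$ on which the $a_n(t)$ remain distinct.

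The core of the argument is the motion law, which I would read off from conservation laws for \eqref{LLG}. Differentiating the vorticity and testing against a cutoff of a coordinate function centered at $a_n(t)$ produces, in the limit, the gyrovector term $4\pi q_n i\dot a_n$ (the rotation $i$ reflecting the cross-product structure of $\omega$); the dissipative $\alpha_\e$-term, rescaled by $\logep$ and using $\alpha_\e\logep\to\alpha_0$ together with the $\pi\logep$ of energy per vortex, produces the damping $\pi\alpha_0\dot a_n$; and the elastic force is the limit of the contour integrals $\int_{\p B_r(a_n)}T_{ij}(\m_{\e_k})\nu_j\,dS$ of the stress-energy tensor $T_{ij}=\scp{\p_i\m}{\p_j\m}-\frac12\delta_{ij}|\nabla\m|^2$, which converges to $\p W/\p a_n(a,d)$. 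To control the quadratic products in $T_{ij}$ I would introduce the Hodge decomposition of the supercurrent $j(m_\e)=\scp{m_\e}{i\nabla m_\e}$ into a divergence-free part $\nabla^\perp\psi$ carrying the vortex topology (governed by $J(m_\e)$) and a curl-free part $\nabla\phi$ carrying the excess, so that the mixed products become div-curl pairings.

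The main obstacle is exactly the passage to the limit in $T_{ij}$. The energy bound yields only weak $H^1_{\loc}$ convergence $\m_{\e_k}\rightharpoonup\m_*$ to the canonical harmonic map, and---unlike in the well-prepared theory, where the excess controls $\int_{\Omega\setminus\cup B_r}|\nabla\m_{\e_k}-\nabla\m_*|^2$---here a genuine defect $\nu$ of mass $\lesssim C_0$ survives in the bulk, so $T_{ij}(\m_{\e_k})\rightharpoonup T_{ij}(\m_*)+\mu_{ij}$ with $\mu_{ij}\ne 0$ in general. The key observation is that away from the vortices both $T_{ij}(\m_{\e_k})$ and $T_{ij}(\m_*)$ are divergence-free to leading order, so the defect $\mu_{ij}$ is a divergence-free symmetric tensor there and its flux $\int_{\p B_r(a_n)}\mu_{ij}\nu_j\,dS$ is independent of $r$. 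To evaluate this flux I would use the Hodge decomposition: the div-curl lemma forces the mixed curl-free/divergence-free products in $T_{ij}$ to pass to the limit, the vortex part is pinned down by the convergence of $J(m_{\e_k})$, and energy quantization together with the conformality of the limiting bubbles (whose trace-free stress vanishes identically) removes any trace-free contribution. The flux of $\mu_{ij}$ then vanishes, so the defect carries no net force and the contour integral delivers exactly $\p W/\p a_n$. Establishing this---in place of the strong-convergence estimate of the well-prepared theory---is what permits excess energy and spontaneous bubbling to be present.
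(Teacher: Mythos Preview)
Your overall architecture is right, and several of the ingredients you name (Struwe-type partial regularity, Jacobian compactness, the conservation laws, the Hodge decomposition of $j(m_\e)$, compensated compactness) are exactly the ones the paper uses. The gap is in the step you yourself flag as the heart of the matter: showing the defect $\mu_{ij}$ carries no net force. Your mechanism is ``energy quantization together with the conformality of the limiting bubbles,'' but neither is available here in the form you need. Energy quantization of Ding--Tian/Qing type requires the tension field $\f_\e(\m_\e)$ to be bounded in $L^2$; the dissipation bound gives only $\alpha_\e\|\f_\e\|_{L^2}^2\le C$, so $\|\f_\e\|_{L^2}^2\lesssim\logep\to\infty$, and you cannot conclude that all of the defect sits in conformal bubbles (nor, a fortiori, that the defect matrices are multiples of the identity). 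You also never establish that the defect is atomic; a diffuse defect of mass $\lesssim C_0$ is not excluded by the div--curl lemma alone, since the ``pure'' terms $\nabla\phi\otimes\nabla\phi$ and $\nabla^\perp\psi\otimes\nabla^\perp\psi$ in the Hodge expansion of $j\otimes j$ are not div--curl pairings.

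The paper's route avoids both issues. First, it proves a small-energy strong-convergence lemma: on any ball where $\limsup\int e_\e(\m_\e)\le\delta^2$, one gets $\m_\e\to\m_*$ strongly in $H^1$ and $m_{\e3}/\e\to0$ in $L^2$. This uses the Hodge decomposition of $\eta(j(m_\e)-j(m_*))$ together with BMO--Hardy duality for the $\nabla^\perp\psi$ piece, the identity $\div j(m_\e)=-\alpha_\e^{-1}\p_t m_{\e3}+(im_\e,\p_t m_\e)$ for the $\nabla\phi$ piece, and an $\e$-regularity estimate giving $|m_{\e3}|\le\beta$ pointwise, which controls $\nabla m_{\e3}$ via the scalar equation for $m_{\e3}$. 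A covering argument then makes the defect in $\nabla\m_\e\otimes\nabla\m_\e$ a finite sum $\sum_p B_p\delta_{b_p(t)}$. Second, rather than computing the $B_p$, the paper shows they do not enter the motion law: testing the vorticity conservation law with $\phi$ supported \emph{away} from the vortices, the left side vanishes (all of $\alpha_\e e_\e$ and $\omega$ sit at the $a_n$) and so does the smooth part of the right side (since $\m_*$ is harmonic there), forcing $\sum_p\nabla^\perp\nabla\phi(b_p(t)):B_p(t)=0$ for all such $\phi$; near each vortex one then takes $\phi$ affine, so $\nabla^\perp\nabla\phi(a_n)=0$ and any defect at the vortex itself is killed as well. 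No conformality and no quantization are needed---the defect is neutralized by the choice of test functions once it is known to be atomic.
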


One unusual feature of the vortex motion law is the possibility of spontaneous changes in the gyrovector, represented by the jumps of $q_n$.  For a single vortex in a disk this  would correspond to a sudden change in the direction or the speed of circulation of a vortex about the origin.

A consequence is that using only Theorem \ref{vortexmotionlawLLG}, we cannot predict the
trajectories from the initial data alone. This is also the reason why we make a statement
only for a sequence $\epsilon_k \to 0$. Unless it is possible to extract more information,
it is conceivable that for a different choice of $\epsilon_k$, the trajectories of the
vortices are different, and in such a situation we would not have convergence of the
entire family $\m_\epsilon$.

From a physical point of view, changes in the direction of 
rotation are not something completely unexpected. They 
are known to occur in field- or current-driven switching processes,
where a vortex is driven from its equilibrium position by an applied
field or current (spiralling outward), then changes its polarization
and spirals back into equilibrium after the external field or current
has been switched off, see e.g. \cite{Gliga08,
GuslienkLeeKim:2008a} for some related simulations. 

In order to prove the theorem, we need to establish two things.  First we show that the stress energy tensor $\nabla \m_\e \otimes \nabla \m_\e$ converges to $\nabla \m_* \otimes \nabla \m_*$ outside of a  defect measure concentrated at a set of delta functions.  Second we analyze how the defect measure interacts with the micromagnetic vortices by applying different test functions on the differential identities for $\omega(\m_\e)$ and $e_\e(\m_\e)$, and it is shown that the defect measure has no effect on the vortex dynamics.



\subsection{Ginzburg-Landau equation with mixed dynamics}

The Landau-Lifshitz-Gilbert equation given above describes a mostly-planar $\Stwo$ micromagnetic vector.  There is an analogous problem in complex Ginzburg-Landau theory in which an order parameter $u: \R^2 \to \C$ evolves according to an equation with mixed dynamics: 
\begin{equation} \label{mixedgl}
(\alpha_\e + i) \dd{u}{t} = \Delta u + {1\over \e^2} u \LC 1 - |u|^2 \RC,
\end{equation}
which is also a hybrid of gradient flow and Schr\"odinger dynamics.  
Smooth solutions to \eqref{mixedgl} satisfy an
energy dissipation equality 
\[
E_{\mathrm{gl}}(u(t)) + \alpha_\e  \int_0^t \int_\Omega \LV \dd{u}{t} \RV^2 dx ds 
= E_{\mathrm{gl}}(u(0)),
\]
where $E_{\mathrm{gl}}(u) = \int_\Omega e_{\mathrm{gl}}(u) \, dx$ and $e_{\mathrm{gl}}(u) = {1\over2} \LV \nabla u \RV^2 + {1\over 4\e^2} \LC 1 - |u|^2 \RC^2$.  
In the Ginzburg-Landau setting, a vortex is defined by the concentration of energy ${e_{g}(u) \over \logep }\to \sum \pi \delta_{a_n}$
and the concentration of the Jacobian $J(u) \to \sum \pi d_n \delta_{a_n}$, both of which are well understood, see \cite{Jerrard-Soner:02}.

The formal vortex motion law of E \cite{E94} was established rigorously by Miot \cite{Miot:08} in the plane and the authors of this paper \cite{Kurzke-Melcher-Moser-Spirn:08} on bounded domains.  In both papers the authors again use the strong well-preparedness of the initial data to 
show that the stress-energy tensor converges to the stress-energy tensor of the limiting canonical harmonic map  as $\e \to 0$.   
This trick, pioneered by Colliander-Jerrard \cite{Colliander-Jerrard:99} and Lin-Xin \cite{Lin-Xin:99} in the context of Ginzburg-Landau vortex dynamics, is reliant on excess energy estimates.  One drawback of this method is that the initial data 
must lie close to the optimal mapping in an $H^1$ sense.  This certainly fails to take advantage of any energy dissipation that 
is present in \eqref{mixedgl}, as is well understood in the purely dissipative problem.

When the dynamics are purely dissipative, much stronger results can be shown that remove the well-preparedness assumption on the initial data.  The first proofs of the vortex motion law for the Ginzburg-Landau heat equation $\alpha_\e \dd{u_\e}{t} = \Delta u_\e + {1\over \e^2} u_\e \LC 1 - |u_\e|^2 \RC$ 
by  Lin \cite{Lin:96} and Jerrard-Soner \cite{Jerrard-Soner:98} proved strong convergence of $\nabla u_\e \to \nabla u_*$, the limiting canonical harmonic map, away from the vortex cores by parabolic estimates, even when the excess energy is of order $O(1)$.  The proofs depend on using estimates of the form $\Delta u_\e + {1\over \e^2} u_\e \LC 1 - |u_\e|^2 \RC = o_\e(1)$, which arise from energy bounds.  Later refinements of the vortex motion law by Bethuel-Orlandi-Smets \cite{Bethuel-Orlandi-Smets:05} and Serfaty \cite{Serfaty:07} again used the purely dissipative nature of the equation.   A naive adaptation of these
 methods fail for \eqref{mixedgl}: while we have 
the same $L^2$ control of $\frac{\p u_\e}{\p t}$ as in the 
purely dissipative case, it is not possible to deduce smallness of 
$\Delta u_\e + {1\over \e^2} u_\e \LC 1 - |u_\e|^2 \RC$. The reason 
for the difference 
is that 
$|\alpha_\e+i|\to 1$ while $|\alpha_\e|\to 0$ as $\e\to 0$.

For initial data without vortices and close to a constant map, 
Miot \cite{Miot:10} studied \eqref{mixedgl} in the damped 
wave regime. It is unclear how this approach can be generalized
to initial data with vortices. 

Using similar arguments to the proof of Theorem~\ref{vortexmotionlawLLG}, we can establish the vortex motion law for \eqref{mixedgl} for initial data that are not well-prepared.

\begin{theorem} \label{vortexmotionlawmixedgl}
Let $u_\e(t)$ be a sequence of solutions to \eqref{mixedgl} with initial data $u_\e(0) = u_\e^0$ and either Dirichlet or homogeneous Neumann boundary data. Assume  that
$E_\e(u_\epsilon^0) \le N \pi \logep + C_0$ for some constant $C_0$ and  
\[
{e_\e(u_\e^0) \over  \logep} \to \sum_{n=1}^N \pi \delta_{a^0_n} \quad \hbox{and} \quad {J(u_\e^0) } \to \sum_{n=1}^N \pi d_n \delta_{a^0_n}
\]
with $d_n = \pm 1$ for $n = 1,\ldots,N$.
Then there exists $T > 0$ such that for all $t \in (0, T)$
\begin{equation}
{e_\e(u_\e(t)) \over  \logep} \to \sum_{n=1}^N \pi \delta_{a_n(t)} \quad \hbox{and} \quad {J(u_\e(t)) } \to \sum_{n=1}^N \pi d_n \delta_{a_n(t)},
\end{equation}
where $a_n(t)$ solves
\begin{equation}  \label{mixedglODE}
2 \pi d_n i  \dot{a}_n + \pi \alpha_0 \dot{a}_n + {\p W \over \p a_n}(a,d) = 0
\end{equation}
for $n = 1,\ldots,N$.  
\end{theorem}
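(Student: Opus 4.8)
The plan is to follow the two-step architecture of the proof of Theorem~\ref{vortexmotionlawLLG}; the complex-valued setting is in fact simpler, since the absence of a polarity means there is no bubbling, the degrees $d_n$ are frozen, and no piecewise-constant datum like $q$ is needed. First I would record the a priori bounds. Because $\alpha_\e>0$, equation~\eqref{mixedgl} is (after multiplication by $(\alpha_\e-i)/(1+\alpha_\e^2)$) a complex-coefficient parabolic equation, so the given solutions obey the energy dissipation equality stated in the text. Combined with the hypothesis $E_\e(u_\e^0)\le N\pi\logep+C_0$ and the standard Ginzburg--Landau lower bound $E_\e\ge N\pi\logep-C$ valid as long as the $N$ vortices persist with degrees $d_n$, this yields $E_\e(u_\e(t))\le N\pi\logep+C_0$ uniformly in $t$ together with the spacetime dissipation bound $\alpha_\e\int_0^T\int_\Omega|\partial_t u_\e|^2\,dx\,dt\le C$. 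The energy bound localizes the Jacobian at no more than $N$ points at each time by \cite{Jerrard-Soner:02}; the dissipation bound, via the pointwise lower bound $\int_{B}|\partial_t u_\e|^2\gtrsim\logep\,|\dot a_n|^2$ near a moving vortex together with $\alpha_\e\logep\to\alpha_0$, gives $\int_0^T|\dot a_n|^2\,dt\le C$ uniformly in $\e$, so along a subsequence the trajectories converge uniformly to $H^1$ curves $a_n(t)$ defined up to the first collision or boundary-contact time $T$.

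The quantities to track are the supercurrent $j_\e=\mathrm{Im}(\bar u_\e\nabla u_\e)$, whose Hodge decomposition $j_\e=\nabla\phi_\e+\nabla^\perp\psi_\e$ has rotational part determined by $\Delta\psi_\e=\curl j_\e=2J(u_\e)\to 2\pi\sum_n d_n\delta_{a_n}$ and gradient part $\nabla\phi_\e$ controlled through $\div j_\e$ and the equation, and the stress--energy tensor $T^\e_{kl}=\partial_k u_\e\cdot\partial_l u_\e-\delta_{kl}e_\e(u_\e)$, which satisfies $\partial_k T^\e_{kl}=\f_\e\cdot\partial_l u_\e$ with $\f_\e=\Delta u_\e+\e^{-2}u_\e(1-|u_\e|^2)=(\alpha_\e+i)\partial_t u_\e$. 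Substituting the equation, $\partial_k T^\e_{kl}=\alpha_\e(\partial_t u_\e\cdot\partial_l u_\e)+\mathrm{Im}(\overline{\partial_t u_\e}\,\partial_l u_\e)$, which already exhibits the dissipative and Hamiltonian pieces. For the gyroscopic coefficient the right vehicle is the vorticity transport identity $\partial_t\omega_\e=2\,\curl\,\mathrm{Im}(\overline{\partial_t u_\e}\,\nabla u_\e)$; the factor $2$ here reflects $\omega_\e=2J(u_\e)$ and the circulation $\int\omega_\e=2\pi d_n$ about each vortex, and it is this $2\pi d_n$ that will appear in~\eqref{mixedglODE}, rather than the Jacobian mass $\pi d_n$.

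The hard part, as flagged in the introduction, is the convergence of $T^\e$. Away from the vortices $u_\e\rightharpoonup u_*$ only weakly in $H^1_{\loc}$, so a priori $\nabla u_\e\otimes\nabla u_\e$ may differ from $\nabla u_*\otimes\nabla u_*$ by a nonnegative defect measure. In the purely dissipative flow one sidesteps this because $\f_\e=\alpha_\e\partial_t u_\e=o(1)$ forces $u_\e$ to be asymptotically harmonic; that escape is unavailable here since $|\alpha_\e+i|\to 1$ keeps $\f_\e$ of order one. Instead I would run a compensated-compactness argument in the spirit of \cite{Helein:02} and \cite{Lin-Riviere:01}: the null-Lagrangian structure of $J(u_\e)$, together with the PDE control of $\div T^\e$ and of $\curl j_\e$, forces the defect measure to be supported on the finite vortex set and isotropic there, of the form $\sum_n\lambda_n(t)\,\delta_{a_n}\,\mathrm{Id}$. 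The crux, and the main obstacle, is to prove that such a diagonal point defect is invisible to the force balance: when inserted into the flux $\int T^\e_{kl}\,\partial_k\eta_l$ for a test field $\eta$ equal to a constant coordinate vector near $a_n$, the trace contribution of $\lambda_n\delta_{a_n}\mathrm{Id}$ produces no net force. Carrying this out rigorously, uniformly in time along the flow, is where the real work lies and the step most likely to require genuinely new estimates.

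Granting the stress--energy convergence, the motion law follows as in Theorem~\ref{vortexmotionlawLLG} by testing the differential identities for $\omega_\e$ and $e_\e$ against a field $\eta$ that localizes the $n$-th vortex and equals a constant coordinate vector on a neighbourhood of $a_n$, then passing to the limit $\e\to0$. Reading off the leading-order behavior $\partial_t u_\e\approx-\dot a_n\cdot\nabla u_\e$ near the vortex (made rigorous through the limiting identities rather than by literal substitution), the dissipative contribution $\alpha_\e\int(\partial_t u_\e\cdot\partial_l u_\e)\eta_l$, using the isotropy $\int\partial_m u_\e\cdot\partial_l u_\e\approx\pi\logep\,\delta_{ml}$ near the vortex, converges via $\alpha_\e\logep\to\alpha_0$ to the damping term $\pi\alpha_0\dot a_n$; the Hamiltonian contribution, carried by the vorticity identity with its circulation $2\pi d_n$, converges to $2\pi d_n\,i\,\dot a_n$, the factor $i$ encoding the $\nabla^\perp$ rotation intrinsic to the Schr\"odinger part; and the spatial flux $-\int T^\e_{kl}\partial_k\eta_l$ converges to $\dd{W}{a_n}(a,d)$ by the classical renormalized-energy computation for $u_*$, with the Dirichlet or Neumann boundary effects absorbed into $W$ and the point defect contributing nothing by the previous step. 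Collecting the three limits gives~\eqref{mixedglODE} for a.e.\ $t\in(0,T)$, while the stated convergences of $e_\e(u_\e(t))/\logep$ and $J(u_\e(t))$ are exactly the localization already in hand; the Neumann case is identical with the corresponding $W$.
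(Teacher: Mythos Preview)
Your overall architecture matches the paper's: prove a structure theorem for the defect of $\nabla u_\e\otimes\nabla u_\e$ away from the vortices via Hodge decomposition and compensated compactness, then show the defect does not enter the limiting force balance. The gap is in your description of the defect measure and the mechanism by which it is dismissed. You assert that the defect is ``supported on the finite vortex set and isotropic there, of the form $\sum_n\lambda_n(t)\delta_{a_n}\mathrm{Id}$,'' and then dispose of it by the trace argument $\nabla^\perp\nabla\phi:\lambda\,\mathrm{Id}=0$. Neither the location nor the isotropy is justified. Excess energy need not sit at the Jacobian singularities, and nothing in the null-Lagrangian structure of $J$ or in the control of $\div T^\e$, $\curl j_\e$ forces the concentration matrix to be a multiple of the identity. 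What the paper actually proves (Lemma~\ref{lemma:strong_convergence} and Lemma~\ref{lemma:defect_measures}) is that outside the vortex balls the defect is supported on finitely many \emph{other} points $b_1,\ldots,b_P$ with \emph{general} matrices $B_p\in\R^{2\times2}$; the bound on $P$ comes from a small-energy/covering argument, not from any relation to the $a_n$.

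The mechanism the paper uses to kill these contributions is different from yours and is the real content of the motion-law section. One first tests the vorticity identity with $\phi\in C_0^\infty(\Omega_r(a))$ supported away from the vortices, obtaining that the limiting vorticity measure $\lambda(t)$ outside the vortex balls satisfies \eqref{*}; a short combinatorial argument (Lemmas~\ref{lemma1}--\ref{lemma4}) then shows $\lambda(t)$ is constant in $t$, which forces $\sum_{p}\nabla^\perp\nabla\phi(b_p(t)):B_p(t)=0$ for all admissible $\phi$ supported away from the vortices. Only after this does one return to test functions localized at the $a_n$, and the defect terms drop out for free. In the complex case this step is cleaner because there is no bubbling and $J(u_\e)\to J(u_*)$ directly, but the logic is the same: the defect points $b_p$ are handled through the vorticity balance, not by an isotropy claim. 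Your trace argument would be valid \emph{if} the $B_p$ were scalar, but you have not shown that, and the paper does not need it.
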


Therefore, the vortex motion law is exactly as in \cite{Kurzke-Melcher-Moser-Spirn:08}, and, unlike in the micromagnetic case, there are no changes in the direction of rotation of a single vortex about the origin in disk domains.  Since we are unable to handle vortex collisions or a vortex migrating to the boundary,  the $T$ in the theorem represents the first time for which either of these two events occur.  Finally, we note that the limiting motion law \eqref{mixedglODE} is independent of subsequence and uniquely determined by the initial data, unlike in the LLG case.  The proof of Theorem~\ref{vortexmotionlawmixedgl} is very similar to the   proof of Theorem~\ref{vortexmotionlawLLG}, and so we only briefly discuss the proof in section~\ref{mixedglsection}. 

\section{Mathematical tools}

Here we provide some mathematical notation, discuss  topological quantities arising in our study, and review facts about the expansion of the micromagnetic energy.

\subsection{Notation}

We first introduce some notation that helps keep track of the positions of the expected vortex
centers. We typically write $a = (a_1,\ldots,a_N) \in \Omega^N$ for these positions.
To control collision and escape to the boundary, we define a minimal measure of intervortex and vortex-boundary distance: 
\[
\rho(a) = \min \left\{ {1\over2} \min_{m \neq n} \LV a_m - a_n \RV, \min_{n = 1, \ldots, N} \operatorname{dist}(a_n, \p \Omega) \right\}.
\]
As we expect the energy to concentrate near the vortices, we sometimes need to cut out
vortex balls of radius $r$ or merely the vortex centers. We define
\[
\Omega_r(a) := \Omega \backslash \bigcup_{n = 1}^N \overline{B_r(a_n)}, \quad \Omega_0(a) := \Omega \backslash \{ a_1, \ldots, a_N\}.
\]

We denote $\LA \cdot, \cdot \RA$ to be the scalar product for elements in $\R^3$, and let $\LC \cdot, \cdot \RC$ denote
the $\R^2$ scalar product, which will arise often when we project $\R^3$ onto $\R^2 \times \{0\}$.  

\subsection{Energy density, vorticity, and the planar Jacobian}
The analogous quantity of  $\omega(\m)$ for $u: \R^2 \to \C$ is the Jacobian $J(u)$.
The Jacobian is useful for describing the winding number about a vortex in Ginzburg-Landau theory.  
We can also write  $J(u) = {1\over2} \curl j(u)$, where the supercurrent $j(u)$ is defined as
\[
j(u) = (i u, \nabla u), 
\]
for the complex-inner product $( \cdot, \cdot )$.  The connections between $\omega(\m)$ and $J(m)$, for the projection of $\m$ onto $\R^2$ (and complexified)  can be seen by:
\begin{align*}
J(m) & = m_3 \omega(\m) \\
\omega(\m) & = 3 m_3 J(m) + \curl \LC m_2 m_3 \nabla m_1 - m_1 m_3 \nabla m_2 \RC,
\end{align*}
both of which follow from direct calculations \cite[Lemma 1]{Kurzke-Melcher-Moser:11}.

Our method of proof entails tracking how concentrations in the energy density and in the vorticity move in time.  In that regard we write down differential identities that are satisfied by $e_\e(\m)$ and $\omega(\m)$. 
In particular we have:
\begin{equation}  \label{conservationenergy}
{\p \over \p t} e_\e(\m) = \alpha_\e \LA \f_\e (\m) , \nabla \m \RA - \operatorname{div} \LA \m \times \f_\e (\m) , \nabla \m \RA - \alpha_\e \LV \f_\e (\m) \RV^2
\end{equation}
and   
\begin{equation} \label{conservationvorticity}
{\p \over \p t} \omega (\m) = \curl \LA \f_\e(\m) , \nabla \m \RA 
+ \alpha_\e \curl \LA \m \times \f_\e (\m) , \nabla \m \RA,
\end{equation}
where $\f_\e$ is defined in \eqref{L2-gradient}.  See \cite{Kurzke-Melcher-Moser-Spirn:11} for derivations of these differential identities.

\subsection{Renormalized energy}

The micromagnetic energy $E_\e(\m)$ can be expanded up to second order.  This expansion, presented in Hang-Lin \cite{Hang-Lin:01}, says that the micromagnetic energy behaves
roughly like
\[
E_\e(\m) \approx N(\pi \logep+\gamma) + W(a, d), 
\]
where $W(a,d)$ is the renormalized energy of Bethuel-Brezis-Helein \cite{Bethuel-Brezis-Helein:94}.

To define $W(a,d)$ we first write down the \emph{canonical harmonic map} $m_*: \Omega \to \mathbb{S}^1$, which is defined via the Hodge system
\[
\operatorname{div} j(m_*) = 0 \qquad \hbox{ and } \qquad \curl j(m_*) = \sum_{n=1}^N 2 \pi \delta_{a_n}
\]
with either the Dirichlet  condition
\[
m_* = g
\]
on $\p \Omega$ or the Neumann condition
\[
j(m_*)  \cdot \nu  = 0
\]
on $\p \Omega$, depending on the problem that we study.  
Then $m_*$ can be written as 
\[
m_* = e^{i\theta}\prod_{n=1}^N \left(\frac{ x - a_n }{ |x - a_n|}\right)^{d_n}
\]
for a harmonic function $\theta : \Omega \to \R$ satisfying the appropriate boundary conditions.
Then
\begin{align*}
W(a,d) & = \lim_{\rho \to 0} \LB \int_{\Omega_\rho(a)} {1\over2} \LV \nabla m_* \RV^2 - N \pi \log {1\over\rho} \RB \\
& = - \pi \sum_{m \neq n} d_m d_n \log |a_m - a_n|  + \hbox{boundary effects},
\end{align*}
which is precisely the Coulombic energy of point particles (with a repulsive boundary in the case of a Dirichlet boundary condition).  

Finally, we note that if $\phi \in C_0^\infty(\Omega)$ such that $\nabla^\perp \nabla \phi$ vanishes near a vortex, then we have \cite{Bethuel-Brezis-Helein:94} 
\begin{equation} \label{gradientRenormalized}
\pi \sum_{n=1}^N \nabla^\perp \phi(a_n) \cdot  {\p \over \p a_n} W(a,d) = - \int_\Omega \nabla^\perp \nabla \phi : \LC \nabla m_* \otimes 
\nabla m_* \RC dx.
\end{equation}
This identity will be important for the derivation of the motion law of the vortices.

\section{The Landau-Lifshitz-Gilbert equation} \label{sectionLLG}

The proof of Theorem \ref{vortexmotionlawLLG} requires a combination of ideas
from various theories and spans several sections. Most of the
arguments do not make use of the boundary conditions at all; therefore, we
treat Dirichlet and Neumann data simultaneously.

In this section, we discuss properties of the Landau-Lifshitz-Gilbert equation
that can be described as well-known, even though they may be difficult to find
in the literature for the exact version of the equation used here. These properties
have been derived for related equations---in particular the harmonic map heat flow
\cite{Struwe:85,Chang:89,Ding-Tian:95,Qing:95} and a simpler version of the Landau-Lifshitz-Gilbert equation
\cite{Guo-Hong:93}. Examining the corresponding arguments, we see that they carry
over to our situation.

We fix $\epsilon > 0$ for the moment. Suppose that we have initial data $\m_0 \in C^\infty(\overline{\Omega};\mathbb{S}^2)$,
and we want to solve the Landau-Lifshitz-Gilbert equation under either Dirichlet or Neumann
boundary conditions.
Following Struwe \cite{Struwe:85}, Chang \cite{Chang:89}, and Guo-Hong \cite{Guo-Hong:93},
we can construct a weak solution $\m$ with $\m(0) = \m_0$ that is smooth away from finitely many points. 
That is, there exists a finite set of singular points
$\Sigma = \{(t^1,x^1),\ldots,(t^I,x^I)\} \subset (0,\infty) \times \overline{\Omega}$ such that
$\m \in C^\infty(([0,\infty) \times \overline{\Omega}) \backslash \Sigma;\mathbb{S}^2)$.
Furthermore, the map $\m$ belongs to the Sobolev space
\[
H^1((0,T);L^2(\Omega;\mathbb{S}^2)) \cap L^\infty((0,\infty);H^1(\Omega;\mathbb{S}^2))
\]
for every $T > 0$.

The singularities arising in this construction are a consequence of energy concentration.
If $(t^i,x^i)$ is a point in $\Sigma$ and $r > 0$ is small enough to separate this point
from all other singularities, then we have a $q_i \in \frac{1}{2}\Z$ such
that
\begin{align*}
\int_{\{t^i\} \times B_r(x^i)} e_\e(\m) \, dx + 4 \pi \LV q_i \RV & \leq \liminf_{t \nearrow t^i} \int_{\{t\} \times B_r(x^i)} e_\e(\m) \, dx, \\
\int_{\{t^i\} \times B_r(x^i)} \omega(\m) \, dx + 4 \pi  q_i  & = \lim_{t \nearrow t^i} \int_{\{t\} \times B_r(x^i)} \omega(\m) \, dx.
\end{align*}
Indeed, the number $q_i$ has a geometric interpretation, and in order to understand it,
we need to analyze the process leading to singularities. This is done with the tools
developed by Ding-Tian \cite{Ding-Tian:95} and Qing \cite{Qing:95}.

Fixing the singular point $(t^i,x^i)$, we can find sequences $x_j \to x^i$, $t_j \nearrow t^i$, and
$r_j \searrow 0$, such that the rescaled maps $\m_j(x) = \m(t_j,r_j x + x_j)$ converge to a critical point
of the Dirichlet functional. These critical points are called harmonic maps, and if they
arise through a blow-up process as here, we also speak of harmonic bubbles. Via the stereographic
projection, every harmonic bubble at a singularity in the interior of $\Omega$ is
identified with a harmonic map $\mathbb{S}^2 \to \mathbb{S}^2$. Since such harmonic maps are very well understood \cite{Eells-Lemaire:78}, we have
good information about their energies. If $\tilde{q}$ is the topological degree
of a bubble at a singularity in the interior of $\Omega$, then the corresponding energy is
$4\pi|\tilde{q}|$. Under Dirichlet boundary conditions, this is the only situation
possible. Under Neumann conditions, bubbles at the boundary are also conceivable. They
give rise to harmonic maps on a hemisphere (with Neumann boundary conditions on the equator),
which can be extended to harmonic maps on $\mathbb{S}^2$ by reflection. Thus we can think of them as
half-bubbles, and the degree is naturally measured by a half-integer $\tilde{q}$.
With this convention, the energy of a bubble remains $4\pi|\tilde{q}|$.

The number $q_i$ in the preceding formulas is the sum of the degrees of one or several bubbles
at the singularity $(t^i,x^i)$. In the first formula, we have an inequality only, because degrees can cancel
each other, whereas the energy is always positive. Equality holds if all the degrees
have the same sign. In any case, we have at least one bubble and therefore
\[
\int_{\{t^i\} \times B_r(x^i)} e_\e(\m) \, dx + 2 \pi \leq \liminf_{t \nearrow t^i} \int_{\{t\} \times B_r(x^i)} e_\e(\m) \, dx.
\]
Using the formula at every singular time and the energy conservation law \eqref{conservationenergy} between
singular times, we obtain the energy inequality
\begin{equation} \label{energy_inequality}
E_\epsilon(\m(t_2)) + 2\pi |\Sigma \cap ((t_1,t_2] \times \overline{\Omega})| + \alpha_\epsilon \int_{t_1}^{t_2} \int_\Omega \left|\dd{\m}{t}\right|^2 \, dx \, dt \le E_\epsilon(\m(t_1))
\end{equation}
for $0 \le t_1 \le t_2$.

Weak solutions of the Landau-Lifshitz-Gilbert equation are not necessarily
unique \cite{Bertsch-DalPasso-vanderHout:02,Topping:02}, but uniqueness
follows when we impose suitable additional conditions. In particular,
it follows from arguments of Freire \cite{Freire:95} (see also Harpes \cite{Harpes:04}
and Rupflin \cite{Rupflin:08}) that there is exactly one weak solution such that
the energy is non-increasing in time. 
We call it the \emph{energy decreasing solution}.

We now consider initial data
$\m_\epsilon^0 \in C^\infty(\overline{\Omega};\mathbb{S}^2)$ that satisfy the
assumptions in section \ref{results}.
We study the energy decreasing solutions $\m_\epsilon$ of the Landau-Lifshitz-Gilbert equation \eqref{LLG}
in $(0,\infty) \times \Omega$ with $\m_\epsilon(0) = \m_\epsilon^0$. Then
each $\m_\epsilon$ is smooth away from finitely many singular times.
That is, there exists a finite set $\Sigma_\epsilon \subset (0,\infty)$
such that $\m_\epsilon$ is smooth in $([0,\infty) \backslash \Sigma_\epsilon) \times \overline{\Omega}$.
Furthermore, we have the energy inequality
\[
E_\epsilon(\m_\epsilon(t_2)) + 2\pi |\Sigma_\epsilon \cap (t_1,t_2]| + \alpha_\epsilon \int_{t_1}^{t_2} \int_\Omega \left|\dd{\m_\epsilon}{t}\right|^2 \, dx \, dt \le E_\epsilon(\m(t_1))
\]
for all $t_1,t_2 \in [0,\infty)$ with $t_1 < t_2$.
Thus we have derived the first statement of Theorem \ref{vortexmotionlawLLG}
from known arguments. It is more difficult to prove the motion law
for the vortices, although in the first step, we still rely
mostly on known results and ideas.

\section{Continuous vortex motion}

The aim of this section is to show that the vortices
(centered at $a_1^0,\ldots,a_N^0$ at time $0$) persist
for positive times and the vortex centers have a continuous
trajectory. More precisely, we want to prove the following.

\begin{theorem} \label{thm:vortex_motion}
There exist a sequence $\epsilon_k \searrow 0$, a number $T > 0$, and a continuous
curve $a : [0,T) \to \Omega^N$ with $a(0) = a^0$, such that $\rho(a(t)) > 0$ and
\[
\alpha_{\epsilon_k} e_{\epsilon_k}(\m_{\epsilon_k}(t)) \to \pi \sum_{n = 1}^N \delta_{a_n(t)} \quad \text{and} \quad J(m_{\epsilon_k}(t)) \to \pi \sum_{n = 1}^N d_n \delta_{a_n(t)}
\]
as distributions for every $t \in [0,T)$ as $k \to \infty$.

Moreover, there exists a finite set $\Sigma \subset [0,T]$ with the
following property. Suppose that $[t_1,t_2] \subset [0,T]$ is an
interval with $\Sigma \cap [t_1,t_2] = \emptyset$. Then there is
a number $K \in \N$ such that $\m_{\epsilon_k}$ is smooth in $(t_1,t_2) \times \Omega$
for every $k \ge K$.
\end{theorem}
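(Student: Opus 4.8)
The plan is to extract a priori estimates from the energy inequality \eqref{energy_inequality}, convert the resulting control on the dissipation into equicontinuity of the Jacobian in a weak norm, and then read off the trajectories and the singular set. Let $\rho_0 = \rho(a^0) > 0$. As long as the concentration set of $J(m_\epsilon(\cdot))$ consists of $N$ points separated from each other and from $\partial\Omega$ by at least $\rho_0/2$ --- a condition that a continuation argument will show holds on a definite interval $[0,T)$ --- the Jerrard--Soner/Bethuel--Brezis--H\'elein lower bound applied to the planar component $m_\epsilon$ (whose core scale is fixed by the potential term $m_3^2/\epsilon^2$ through $|m_\epsilon|^2 = 1 - m_3^2$; see \cite{Bethuel-Brezis-Helein:94,Jerrard-Soner:02} and \cite{Hang-Lin:01}) gives $E_\epsilon(\m_\epsilon(t)) \ge \pi N \logep - C(\rho_0)$. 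Combining this with the hypothesis $E_\epsilon(\m_\epsilon^0) \le \pi N \logep + C_0$ in \eqref{energy_inequality} yields two facts uniform in $\epsilon$: the number of singular times is bounded, $2\pi\,|\Sigma_\epsilon \cap [0,T)| \le C_0 + C(\rho_0)$, and the dissipation is controlled,
\[
\alpha_\epsilon \int_0^{T}\!\!\int_\Omega \LV \dd{\m_\epsilon}{t} \RV^2 dx\,dt \le C_0 + C(\rho_0),
\quad\text{so}\quad
\int_0^{T}\!\!\int_\Omega \LV \dd{\m_\epsilon}{t} \RV^2 dx\,dt \le C \logep,
\]
since $\alpha_\epsilon \logep \to \alpha_0$.

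The core of the proof is to turn this dissipation bound into equicontinuity of $t \mapsto J(m_\epsilon(t))$. Differentiating the planar Jacobian and using \eqref{LLG} in the same way that \eqref{conservationvorticity} is obtained, $\partial_t J(m_\epsilon)$ is a curl of terms bilinear in $\partial_t\m_\epsilon$ and $\nabla\m_\epsilon$. Pairing against a test function and integrating in time, a naive Cauchy--Schwarz estimate bounds the motion of $J$ by $\|\partial_t\m_\epsilon\|_{L^2}\,\|\nabla\m_\epsilon\|_{L^2}$, which loses a full factor of $\logep$ because the Dirichlet energy near a core is of that order; this is the main obstacle. To recover the sharp estimate I would invoke the product estimate of Sandier--Serfaty \cite{Sandier-Serfaty:04.1} at the critical scale $\alpha_\epsilon \sim 1/\logep$, which bounds the distance between $J(m_\epsilon(t_1))$ and $J(m_\epsilon(t_2))$ in a weak (flat) norm by
\[
C\,(t_2 - t_1)^{1/2}\LC \frac{1}{\logep}\int_{t_1}^{t_2}\!\!\int_\Omega \LV \dd{\m_\epsilon}{t} \RV^2 dx\,dt \RC^{1/2} + o(1) \le C\,(t_2 - t_1)^{1/2} + o(1).
\]

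With this equicontinuity in hand, the remaining steps are standard. By the Jerrard--Soner compactness theorem \cite{Jerrard-Soner:02}, the energy bound forces $J(m_\epsilon(t))$ to lie close to a sum of $N$ unit masses; the weak-norm equicontinuity together with Arzel\`a--Ascoli then produces a subsequence $\epsilon_k \searrow 0$ and a continuous limit $t \mapsto \pi\sum_n d_n\delta_{a_n(t)}$, with $a : [0,T)\to\Omega^N$ continuous and $a(0) = a^0$ by the initial concentration hypothesis; the degrees $d_n$ are preserved because they are integer valued and transported continuously. Since $\rho(a^0) > 0$ and the modulus of continuity is uniform in $\epsilon$, separation $\ge \rho_0/2$ persists up to some $T = T(\rho_0) > 0$, making the first paragraph self-consistent, so no collision or escape to $\partial\Omega$ occurs and exactly $N$ vortices persist on $[0,T)$. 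For the energy density, the matching upper and lower bounds of the first paragraph force the excess energy to remain $O(1) = o(\logep)$, so no normalized energy sits outside arbitrarily small balls about the $a_n(t)$, giving $\alpha_{\epsilon_k} e_{\epsilon_k}(\m_{\epsilon_k}(t)) \to \pi\sum_n \delta_{a_n(t)}$.

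Finally, the finite set $\Sigma$ comes from the singular times. Each $\Sigma_{\epsilon_k}\cap[0,T]$ has at most $M := \lfloor (C_0 + C(\rho_0))/2\pi\rfloor$ points; passing to a further subsequence their positions converge, and I let $\Sigma \subset [0,T]$ be the resulting set of at most $M$ limit points. If $[t_1,t_2]\subset[0,T]$ is disjoint from the closed finite set $\Sigma$, then a singular time of $\m_{\epsilon_k}$ lying in $(t_1,t_2)$ for infinitely many $k$ would have a limit point in $[t_1,t_2]\cap\Sigma = \emptyset$, a contradiction; hence for all large $k$ there are no singular times in $(t_1,t_2)$, and $\m_{\epsilon_k}$ is smooth on $(t_1,t_2)\times\Omega$, as claimed. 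The single step I expect to be genuinely hard is the critical-scale velocity estimate of the second paragraph, since the dissipation is only of the weak order $\alpha_\epsilon^{-1}$ and extracting continuous motion from it requires the sharp product estimate rather than a direct energy pairing.
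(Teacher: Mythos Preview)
Your proposal is correct and follows essentially the same approach as the paper: invoke the Sandier--Serfaty product estimate \cite{Sandier-Serfaty:04.1} to obtain equicontinuity of $t\mapsto J(m_{\epsilon}(t))$ in a weak norm, combine with the Jerrard--Soner structure theorem \cite{Jerrard-Soner:02} and Arzel\`a--Ascoli to extract continuous vortex trajectories, deduce convergence of the normalized energy density from matching upper and lower bounds, and build $\Sigma$ from the uniformly bounded singular times. The paper compresses most of this into a direct citation of Theorem~3 of \cite{Sandier-Serfaty:04.1}, whereas you spell out the continuation argument needed to close the loop between the energy lower bound and the dissipation control; both routes are the same in substance.
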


\begin{proof}
The convergence of the Jacobians $J(m_{\epsilon_k})$ (for a suitable
subsequence) follows almost directly from results of Sandier-Serfaty
\cite{Sandier-Serfaty:04.1}. Namely, Theorem 3 implies that
there exist a sequence $\epsilon_k \searrow 0$ and a function
$J_* \in C^{0,1/2}([0,T);\mathcal{M}(\Omega))$ such that
\[
J(m_{\epsilon_k}(t)) \to J_*(t)
\]
in $\dot{W}^{-1,1}(\Omega)$ for every $t \in [0,T)$.
On the other hand, the logarithmic energy bound obtained
from the energy inequality \eqref{energy_inequality}, together with results of
Jerrard-Soner \cite[Theorem 3.1]{Jerrard-Soner:02},
give good information about the structure of $J_*(t)$: it must
be of the form
\[
J_*(t) = \pi \sum_{n = 1}^{\tilde{N}} \tilde{d}_n(t) \delta_{\tilde{a}_n(t)}
\]
for certain integers $\tilde{d}_1(t),\ldots,\tilde{d}_{\tilde{N}}(t)$
and certain points $\tilde{a}_1(t),\ldots,\tilde{a}_{\tilde{N}}(t) \in \Omega$.
Let $\eta > 0$. By the continuity of $J_*$, we can choose $T$ small enough
so that
\[
\left\|\sum_{n = 1}^{\tilde{N}} \tilde{d}_n(t) \delta_{\tilde{a}_n(t)} - \sum_{n = 1}^N d_n \delta_{a_n^0}\right\|_{\dot{W}^{-1,1}(\Omega)} \le \eta.
\]
If $\eta$ is chosen sufficiently small, then it follows that
$\tilde{N} = N$ and $\tilde{d}_n(t) = d_n$ for $n = 1,\ldots,N$.
Moreover, the continuity of $J_*$ also implies that
the curves $a_1,\ldots,a_N$ are continuous in $[0,T)$.

The convergence of the Jacobians implies that for every
$t \in [0,T)$ and for every $r \in (0,\rho(a(t))]$, we have
\[
\int_{B_r(a_n(t))} e_{\epsilon_k}(\m_{\epsilon_k}) \, dx \ge \pi \log \frac{r}{\epsilon_k} - C.
\]
Hence
\[
\alpha_{\epsilon_k} \int_{\Omega_r(a(t))} e_{\epsilon_k}(\m_{\epsilon_k}) \, dx \to 0,
\]
and the convergence of the energy density follows as well.

For every $t \in [0,T)$, we have
\[
\liminf_{k \to \infty} \left(E_{\epsilon_k}(\m_{\epsilon_k}(t)) - N\pi \log \frac{1}{\epsilon_k}\right) > - \infty.
\]
We may assume that the same holds at $t = T$ (otherwise
we replace $T$ by a smaller number). Then we have
a uniform bound for the number of singular times in $[0,T]$ by \eqref{energy_inequality}, and we may
assume that there exist a finite set $\Sigma \subset [0,T]$
such that
\[
\limsup_{k \to \infty} \max_{s \in \Sigma_{\epsilon_k} \cap [0,T]} \min_{t \in \Sigma} |s - t| = 0.
\]
This set then has the required property.
\end{proof}

\section{Strong convergence}

In this section, we study the behavior of the solutions at a
fixed time where no concentration of the kinetic energy occurs.
It is then convenient to rescale the time axis,
and therefore we now consider the family of equations
\begin{equation} \label{LLG_rescaled}
\frac{1}{\alpha_\epsilon} \dd{\m_\epsilon}{t} = \m \times \left(\dd{\m_\epsilon}{t} - \f_\epsilon(\m_\epsilon)\right) \quad \mbox{in } (0,1) \times \Omega.
\end{equation}
We assume that we have weak solutions $\m_\epsilon \in C^\infty([0,1) \times \Omega;\mathbb{S}^2)$
satisfying
\begin{equation} \label{time_derivative}
\lim_{\epsilon \to 0} \int_0^1 \int_\Omega \left|\dd{\m_\epsilon}{t}\right|^2 \, dx \, dt = 0.
\end{equation}
Let $a \in \Omega^N$ and suppose that there exists a function $\gamma : (0,\infty) \to \R$
such that for any $r > 0$,
\[
\limsup_{\epsilon \to 0} \sup_{0 \le t \le 1} \int_{\Omega_r(a)} e_\epsilon(\m_\epsilon(t,x)) \, dx \le \gamma(r).
\]
Furthermore, we assume that $\m_\epsilon \rightharpoonup \m_*$
weakly in $H_\loc^1((0,1) \times \Omega_0(a))$
and strongly in $L_\loc^2((0,1) \times \Omega_0(a))$.

\begin{lemma} \label{lemma:strong_convergence}
Under these assumptions the following holds true.
\begin{enumerate}
\item \label{convergence_part1} The limit is a time-independent harmonic map into $S^1 \times \{0\}$, i.e., it
satisfies $m_{*3} = 0$ and $\dd{\m_*}{t} = 0$, and
\[
\div j(m_*) = 0, \quad \curl j(m_*) = 0 \quad \mbox{in } (0,1) \times \Omega_0(a).
\]
\item Consider the measures
\[
\mu_\epsilon = \mathcal{L}^2 \restr (\nabla \m_\epsilon(0) \otimes \nabla \m_\epsilon(0)).
\]
Let $r > 0$. There exist a sequence $\epsilon_k \searrow 0$, finitely many points $b_1,\ldots,b_P \in \Omega_r(a)$, and
matrices $B_1,\ldots,B_P \in \R^{2 \times 2}$ such that
\[
\mu_{\epsilon_k} \stackrel{*}{\rightharpoonup} \mathcal{L}^2 \restr (\nabla \m_* \otimes \nabla \m_*) + \sum_{p = 1}^P B_p \delta_{b_p}
\]
weakly* in $(C_0^0(\Omega_r(a)))^*$. Furthermore, the number $P$
depends only on $\gamma(r)$.
\end{enumerate}
\end{lemma}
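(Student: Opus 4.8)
The plan is to treat the two parts separately. For part~\eqref{convergence_part1}, the vanishing of $m_{*3}$ and of $\partial_t\m_*$ is immediate: the energy bound on $\Omega_r(a)$ forces $\int_{\Omega_r(a)}m_{\e,3}^2\le 2\e^2\gamma(r)$ uniformly in $t$, so $m_{\e,3}\to0$ in $L^2$ and, by the assumed strong $L^2_{\loc}$ convergence, $m_{*3}=0$; likewise \eqref{time_derivative} gives $\partial_t\m_\e\to0$ in $L^2$, hence $\partial_t\m_*=0$. The two Hodge equations are the substance. For $\curl j(m_*)=0$ I would note that $\curl j(m_\e)=2J(m_\e)$ and that $j(m_\e)=m_{\e,1}\nabla m_{\e,2}-m_{\e,2}\nabla m_{\e,1}\rightharpoonup j(m_*)$ as distributions (product of an $L^2_{\loc}$-strong and an $L^2_{\loc}$-weak factor); since the energy bound on $\Omega_r(a)$ precludes any logarithmic concentration of $J(m_\e)$ away from $a$, the distributional limit $\tfrac12\curl j(m_*)$ carries no mass in $\Omega_0(a)$, whence $\curl j(m_*)=0$ there.

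For $\div j(m_*)=0$ I would use the direct computation that, for $i=1,2$, $\Delta m_{\e,i}=f_{\e,i}-(|\nabla\m_\e|^2+\e^{-2}m_{\e,3}^2)m_{\e,i}$, so that the tension and penalty terms cancel in $\div j(m_\e)=m_{\e,1}\Delta m_{\e,2}-m_{\e,2}\Delta m_{\e,1}$ and one is left with $\div j(m_\e)=m_{\e,1}f_{\e,2}-m_{\e,2}f_{\e,1}=(\m_\e\times\f_\e)\cdot\mathbf e_3$. Rearranging \eqref{LLG_rescaled} gives $\m_\e\times\f_\e=\m_\e\times\partial_t\m_\e-\alpha_\e^{-1}\partial_t\m_\e$, hence $\div j(m_\e)=(\m_\e\times\partial_t\m_\e)_3-\alpha_\e^{-1}\partial_t m_{\e,3}$. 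The first term tends to $0$ in $L^2$; the genuinely dangerous term $\alpha_\e^{-1}\partial_t m_{\e,3}$ I would handle by testing against $\Psi\in C_c^\infty((0,1)\times\Omega_0(a))$ and integrating by parts in $t$, bounding it by $\alpha_\e^{-1}\|m_{\e,3}\|_{L^2}\|\partial_t\Psi\|_{L^2}\lesssim\alpha_\e^{-1}\e\sqrt{\gamma(r)}\to0$ because $\alpha_\e^{-1}\sim\log\tfrac1\e$ and $\e\log\tfrac1\e\to0$. Thus $\div j(m_\e)\to0$ as distributions, and since $\div j(m_\e)\to\div j(m_*)$ by the weak convergence of $j$, we obtain $\div j(m_*)=0$. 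Isolating this cancellation and the single dangerous term is the heart of the first part. A byproduct, used below, is the identity $\f_\e=\partial_t\m_\e+\alpha_\e^{-1}\m_\e\times\partial_t\m_\e$ from crossing \eqref{LLG_rescaled} with $\m_\e$.

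For part~(ii) I would start from the uniform bound $\int_{\Omega_r(a)}|\nabla\m_\e(0)|^2\le 2\gamma(r)$, which makes $\{\mu_\e\}$ and the scalar measures $|\nabla\m_\e(0)|^2\mathcal L^2$ bounded in total variation on $\overline{\Omega_r(a)}$; passing to a subsequence gives $\mu_{\e_k}\stackrel{*}{\rightharpoonup}\mu$ and $|\nabla\m_{\e_k}(0)|^2\mathcal L^2\stackrel{*}{\rightharpoonup}\lambda$. Writing $\nu:=\lambda-|\nabla\m_*(0)|^2\mathcal L^2\ge0$ for the energy defect (nonnegativity by weak lower semicontinuity), the task reduces to showing $\nu$ is a finite sum of point masses and that convergence is strong off its support. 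I would fix an $\epsilon$-regularity threshold $\eta_0>0$ and take $b_1,\dots,b_P$ to be the atoms of $\lambda$ of mass $\ge\eta_0$; since $\lambda(\Omega_r(a))\le 2\gamma(r)$, this forces $P\le 2\gamma(r)/\eta_0$, so $P$ depends only on $\gamma(r)$.

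The crux is to upgrade weak to strong convergence away from the $b_p$, and here I would use the Hodge decomposition of the supercurrent. On a ball $B_\rho(x_0)\subset\Omega_r(a)$ avoiding the atoms, write $j(m_{\e_k}(0))-j(m_*(0))=\nabla\xi_k+\nabla^\perp\psi_k$, so that $\Delta\xi_k=\div j(m_{\e_k}(0))$ and $\Delta\psi_k=2\big(J(m_{\e_k}(0))-J(m_*(0))\big)$, using $\div j(m_*)=\curl j(m_*)=0$ from part~(i). The $\curl$ part is controlled because the limit of $J(m_{\e_k}(0))$ has no mass in $B_\rho(x_0)$, and the $\div$ part by the fixed-time analogue of the estimate above; on a sub-threshold ball the radial part $\nabla|m_{\e_k}|$ and $\nabla m_{\e_k,3}$ are also small, so $\nabla\m_{\e_k}(0)\to\nabla\m_*(0)$ strongly in $L^2(B_{\rho/2}(x_0))$. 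Granting this, the absolutely continuous part of $\mu$ equals $\nabla\m_*(0)\otimes\nabla\m_*(0)\,\mathcal L^2$, and since $|(\nabla\m\otimes\nabla\m)_{ij}|\le|\nabla\m|^2$ the singular part is dominated by $\nu$ and hence concentrated on $\{b_1,\dots,b_P\}$, giving $\mu=\nabla\m_*(0)\otimes\nabla\m_*(0)\,\mathcal L^2+\sum_{p=1}^P B_p\delta_{b_p}$ with $B_p\in\R^{2\times 2}$. I expect the main obstacle to be exactly this strong-convergence step: extracting the fixed-time $H^{-1}$ smallness of $\div j(m_{\e_k}(0))$ from the merely time-integrated control \eqref{time_derivative} of $\partial_t\m_\e$ (which I would address by integrating the differential identities over a vanishing time window around $t=0$, or by arranging $t=0$ to be a good time), and making the $\epsilon$-regularity and Hodge estimates quantitative enough to rule out non-atomic defects.
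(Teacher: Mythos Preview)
Your treatment of part~\eqref{convergence_part1} is correct and essentially coincides with the paper's: the identity $\div j(m_\e)=(im_\e,\partial_t m_\e)-\alpha_\e^{-1}\partial_t m_{\e,3}$ together with integration by parts in $t$ is exactly what is used, and $\curl j(m_*)=0$ is immediate once $m_*$ is $S^1$-valued.

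For part~(ii), however, you have identified the right obstacle but not resolved it, and your proposed curl estimate is also insufficient. Two concrete gaps:

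\emph{Fixed-time versus time-integrated control of $\div j$.} At the fixed time $t=0$ you have no usable bound on $\div j(m_{\e_k}(0))=-\alpha_{\e_k}^{-1}\partial_t m_{\e_k,3}(0)+\dots$: condition~\eqref{time_derivative} is only an $L^2$ bound in space-time, and there is no way to ``arrange $t=0$ to be a good time'' since $t=0$ is prescribed. The paper's fix is to work in space-time from the outset. On a small-energy ball $B_{2r}(x_0)$ one first propagates the smallness for a short time $\tau>0$ via the localized energy identity, then does the Hodge decomposition of $\eta(j(m_\e(t))-j(m_*))$ and integrates everything over $(0,\tau)$. The dangerous term $\alpha_\e^{-1}\int_0^\tau\!\!\int\eta\phi_\e\,\partial_t m_{\e,3}$ is now genuinely amenable to integration by parts in $t$: the boundary terms are $O(\e)$, and $\partial_t\phi_\e$ is controlled in $L^p$, $p<2$, via the identity $\partial_t j(m_\e)=2(i\partial_t m_\e,\nabla m_\e)+\nabla(im_\e,\partial_t m_\e)$. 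This yields strong convergence of $\nabla\m_\e$ in $L^2((0,\tau)\times B_{r/2}(x_0))$; one then transfers it to $t=0$ by the oscillation estimate $\osc_{t\in[0,\tau]}\int\chi^2 e_\e(\m_\e(t))\to 0$, which follows from \eqref{conservationenergy} and \eqref{time_derivative}. Your suggestion of ``a vanishing time window'' points in this direction but is not the argument: the window must be of fixed positive length for the integration by parts to gain anything.

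\emph{The curl part.} Knowing that $J(m_{\e_k}(0))$ has no concentration on $B_\rho(x_0)$ gives only distributional convergence of $\Delta\psi_k$, hence at best weak (or weak-$L^2$) control of $\nabla\psi_k$; it does not yield $\nabla\psi_k\to 0$ in $L^2$, which is what is needed to upgrade to strong convergence. The paper closes this with compensated compactness: on a small-energy ball one has $[\eta(im_\e-im_*)]_{\BMO}\lesssim\delta$ and $[im_*]_{\BMO}\lesssim\delta$, and the div-curl structure of $\nabla^\perp\psi_\e\cdot(\,\cdot\,,\nabla m_\e)$ then gives, via \cite{Coifman-Lions-Meyer-Semmes}, an estimate of the form $\int\eta\nabla^\perp\psi_\e\cdot(j(m_\e)-j(m_*))\le C\delta\int\eta^2|\nabla m_\e-\nabla m_*|^2+o_\e(1)$, which can be absorbed for $\delta$ small. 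A separate step, using harmonic-map $\epsilon$-regularity on the good-time set $\Lambda_\e$ to get $|m_{\e,3}|\le\beta$ in $L^\infty$, is needed to control $\int\eta^2(|\nabla m_{\e,3}|^2+\e^{-2}m_{\e,3}^2)$ by $C\beta\int\eta^2|\nabla\m_\e-\nabla\m_*|^2$; this is what makes the ``radial'' contributions small, not merely the sub-threshold energy.

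Once strong $H^1$ convergence holds on every sub-threshold ball, your covering/defect-measure conclusion is correct, and the bound $P\le C\gamma(r)/\eta_0$ is exactly the paper's.
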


\begin{remark}
We will apply this result in a situation where the assumptions
are justified only for a subsequence $\epsilon_k \searrow 0$.
We give a proof only for the result as stated in order to
avoid awkward notation; however, everything works for a subsequence
as well.
\end{remark}

\begin{proof}
It follows immediately from the assumptions that $m_{*3} = 0$
and $\dd{\m_*}{t} = 0$. As a map with values in a $1$-dimensional
manifold, the limit automatically satisfies
\[
\curl j(m_*) = 0 \quad \mbox{in } (0,1) \times \Omega_0(a).
\]
Equation \eqref{LLG_rescaled} implies
\begin{equation} \label{third_component}
\div j(m_\epsilon) = -\frac{1}{\alpha_\epsilon} \dd{m_{\epsilon 3}}{t} + \left(im_\epsilon,\dd{m_\epsilon}{t}\right).
\end{equation}
Multiplying by a test function and integrating by parts in the first term on the right hand side,
we also find
\[
\div j(m_*) = 0 \quad \mbox{in } (0,1) \times \Omega_0(a).
\]
It follows by elliptic regularity theory that $\m_*$ is smooth away from the vortices.

Let $\delta > 0$ and suppose that we have a disk $B_{2r}(x_0) \subset \Omega$ with
\begin{equation} \label{small_energy}
\limsup_{\epsilon \to 0} \int_{B_{2r}(x_0)} e_\epsilon(\m_\epsilon(0)) \, dx \le \delta^2.
\end{equation}
We first want to show that a similar inequality persists for some positive time. To this end,
let $\zeta \in C_0^\infty(B_{2r}(x_0))$ with $0 \le \zeta \le 1$ and $|\nabla \zeta| \le C/r$.
Here and in the following, we indiscriminately use the symbol $C$ for various universal constants. Set
\[
F_\epsilon(t) = \int_{\{t\} \times \Omega} \zeta^2 e_\epsilon(\m_\epsilon) \, dx \quad \mbox{and} \quad
G_\epsilon(t) = \int_{\{t\} \times \Omega} \left|\dd{\m_\epsilon}{t}\right|^2 \, dx.
\]
Then we have
\[
\begin{split}
F_\epsilon'(t) & = - \int_{\{t\} \times \Omega} \zeta^2 \left|\dd{\m_\epsilon}{t}\right|^2 \, dx - 2 \int_{\{t\} \times \Omega} \zeta \nabla \zeta \cdot \scp{\dd{\m_\epsilon}{t}}{\nabla \m_\epsilon} \, dx \\
& \le \frac{C}{r}\sqrt{F_\epsilon(t) G_\epsilon(t)}.
\end{split}
\]
It follows that
\[
\frac{d}{dt} \sqrt{F_\epsilon(t)} \le \frac{C}{r} \sqrt{G_\epsilon(t)}.
\]
Hence
\[
\sqrt{F_\epsilon(s)} \le C\delta + \frac{C}{r}\left(s \int_0^s \int_\Omega \left|\dd{\m_\epsilon}{t}\right|^2 \, dx \,  dt\right)^{1/2},
\]
and in particular
\[
\int_{B_r(x_0)} e_\epsilon(\m_\epsilon(t,x)) \, dx \le C\delta^2
\]
for $t \in [0,r^2 \delta^2]$. Set $\tau = r^2 \delta^2$.

Choose a cut-off function $\eta \in C_0^\infty(B_r(x_0))$ with $0 \le \eta \le 1$ and $|\nabla \eta| \le C/r$. Consider the Hodge decomposition
\[
\eta(j(m_\epsilon(t)) - j(m_*)) = \nabla \phi_\epsilon + \nabla^\perp \psi_\epsilon \quad \mbox{in } \R^2,
\]
then
\[
\|\nabla \phi_\epsilon(t)\|_{L^2(\R^2)}^2 + \|\nabla \psi_\epsilon(t)\|_{L^2(\R^2)}^2 = \|\eta(j(m_\epsilon(t)) - j(m_*))\|_{L^2(B_r(x_0))}^2
\]
for every $t \in [0,\tau]$. Using \eqref{third_component}, we also obtain
\[
\begin{split}
\int_0^{\tau} \int_\Omega \eta^2 |j(m_\epsilon) - j(m_*)|^2 \, dx \, dt
& = \frac{1}{\alpha_\epsilon} \int_0^{\tau} \int_\Omega \eta \phi_\epsilon \dd{m_{\epsilon 3}}{t} \, dx \, dt \\
& \quad - \int_0^{\tau} \int_\Omega \eta \phi_\epsilon \left(im_\epsilon,\dd{m_\epsilon}{t}\right) \, dx \, dt \\
& \quad - \int_0^{\tau} \int_\Omega \phi_\epsilon \nabla \eta \cdot (j(m_\epsilon) - j(m_*)) \, dx \, dt \\
& \quad + \int_0^{\tau} \int_\Omega \eta \nabla^\perp \psi_\epsilon \cdot (j(m_\epsilon) - j(m_*)) \, dx \, dt.
\end{split}
\]
For almost every $t \in (0,\tau)$, we have $\phi_\epsilon(t) \to 0$ strongly in $L_\loc^p(\R^2)$ for every $p < \infty$.
Using Lebesgue's convergence theorem, we prove that $\phi_\epsilon(t) \to 0$ strongly in $L_\loc^p((0,\tau) \times \R^2)$.
Hence
\[
\int_0^{\tau} \int_\Omega \phi_\epsilon \nabla \eta \cdot (j(m_\epsilon) - j(m_*)) \, dx \, dt \to 0
\]
and
\[
\int_0^{\tau} \int_\Omega \eta \phi_\epsilon \left(im_\epsilon,\dd{m_\epsilon}{t}\right) \, dx \, dt \to 0
\]
as $\epsilon \to 0$. Moreover,
\begin{multline*}
\int_0^{\tau} \int_\Omega \eta \nabla^\perp \psi_\epsilon \cdot (j(m_\epsilon) - j(m_*)) \, dx \, dt \\
\begin{aligned}
& = \int_0^{\tau} \int_\Omega \eta \nabla^\perp \psi_\epsilon \cdot (im_\epsilon - im_*,\nabla m_\epsilon) \, dx \, dt \\
& \quad + \int_0^{\tau} \int_\Omega \nabla^\perp \psi_\epsilon \cdot (im_*,\nabla(\eta(m_\epsilon - m_*))) \, dx \, dt \\
& \quad - \int_0^{\tau} \int_\Omega \nabla^\perp \psi \cdot \nabla \eta (im_*,m_\epsilon - m_*) \, dx \, dt.
\end{aligned}
\end{multline*}
We clearly have
\[
\int_0^{\tau} \int_\Omega \nabla^\perp \psi \cdot \nabla \eta (im_*,m_\epsilon - m_*) \, dx \, dt \to 0.
\]
For almost every $t$, we observe that
\[
[\eta(im_\epsilon - im_*)]_{\BMO(\R^2)} \le C \|\eta(\nabla m_\epsilon - \nabla m_*)\|_{L^2(\Omega)} + \frac{C}{r}\|m_\epsilon - m_*\|_{L^2(B_r(x_0))}.
\]
The last term on the right hand side converges to $0$. With compensated compactness arguments \cite{Coifman-Lions-Meyer-Semmes}, we now estimate
\[
\int_0^{\tau} \int_\Omega \eta \nabla^\perp \psi_\epsilon \cdot (im_\epsilon - im_*,\nabla m_\epsilon) \, dx \, dt \le C\delta \int_0^{\tau} \int_\Omega \eta^2 |\nabla m_\epsilon - \nabla m_*|^2 \, dx \, dt + o_\epsilon(1).
\]
Similarly,
\[
[im_*]_{\BMO(B_r(x_0))} \le C \delta,
\]
and therefore,
\[
\int_0^{\tau} \int_\Omega \nabla^\perp \psi_\epsilon \cdot (im_*,\nabla(\eta(m_\epsilon - m_*))) \, dx \, dt \le C\delta \int_0^{\tau} \int_\Omega \eta^2 |\nabla m_\epsilon - \nabla m_*|^2 \, dx \, dt + o_\epsilon(1).
\]
Finally, we compute
\begin{multline*}
\int_0^{\tau} \int_\Omega \eta \phi_\epsilon \dd{m_{\epsilon 3}}{t} \, dx \, dt = \int_{\{\tau\} \times \Omega} \eta \phi_\epsilon m_{\epsilon 3} \, dx - \int_{\{0\} \times \Omega} \eta \phi_\epsilon m_{\epsilon 3} \, dx \\
- \int_0^{\tau} \int_\Omega \eta \dd{\phi_\epsilon}{t} m_{\epsilon 3} \, dx \, dt.
\end{multline*}
The first two terms on the right hand side are of order $O(\epsilon)$. Now we use the identity
\[
\dd{}{t} j(m_\epsilon) = 2\left(i\dd{m_\epsilon}{t},\nabla m_\epsilon\right) + \nabla \left(im_\epsilon,\dd{m_\epsilon}{t}\right);
\]
hence,
\[
\Delta \left(\dd{\phi_\epsilon}{t} - \eta\left(im_\epsilon,\dd{m_\epsilon}{t}\right)\right) = \div \left(2\eta \left(i\dd{m_\epsilon}{t},\nabla m_\epsilon\right) - \nabla \eta \left(im_\epsilon,\dd{m_\epsilon}{t}\right)\right).
\]
We conclude that
\[
\left\|\dd{\phi_\epsilon}{t}\right\|_{L^p((0,\tau) \times \R^2)} \le C
\]
for every $p < 2$. Thus
\[
\frac{1}{\alpha_\epsilon} \int_0^{\tau} \int_\Omega \eta \phi_\epsilon \dd{m_{\epsilon 3}}{t} \, dx \, dt \to 0.
\]
Combining these estimates, we obtain
\[
\int_0^{\tau} \int_\Omega \eta^2 |j(m_\epsilon) - j(m_*)|^2 \, dx \, dt \le C\delta \int_0^{\tau} \int_\Omega \eta^2 |\nabla m_\epsilon - \nabla m_*|^2 \, dx \, dt + o_\epsilon(1).
\]

Now let
\[
\Lambda_\epsilon = \set{t \in (0,\tau)}{\left\|\dd{m_\epsilon}{t}(t)\right\|_{L^2(\Omega)} \le \frac{\alpha_\epsilon^2}{\epsilon}}.
\]
Fix $\epsilon \in (0,\frac{1}{2}]$ and $t \in \Lambda_\epsilon$. For a given $x_1 \in B_r(x_0)$, consider
\[
\tilde{\m}(x) = \m_\epsilon(t,\epsilon x + x_1).
\]
Then we have
\[
\|\f_1(\tilde{\m})\|_{L^2(B_1(0))} \le C\alpha_\epsilon
\]
and
\[
\|e_1(\tilde{\m})\|_{L^1(B_1(0))} \le C \delta^2.
\]
Choose $\beta > 0$. If $\delta$ and $\epsilon$ are sufficiently small, then known estimates
from the theory of harmonic maps (e.g. \cite[Lemma 2.1]{Ding-Tian:95}) imply $\tilde{m}_3^2 \le \beta$.
We conclude that $m_{\epsilon 3}^2 \le \beta$ in $\Lambda_\epsilon \times B_r(x_0)$.

Using the equation
\[
\Delta m_{\epsilon 3} = - |\nabla \m_\epsilon|^2 m_{\epsilon 3} + \frac{1}{\epsilon^2} (m_{\epsilon 3} - m_{\epsilon 3}^3) + f_{\epsilon 3}(\m_\epsilon),
\]
we obtain
\begin{multline*}
\int_{\{t\} \times \Omega} \eta^2 |\nabla m_{\epsilon 3}|^2 \, dx = \int_{\{t\} \times \Omega} \eta^2 m_{\epsilon 3}^2 |\nabla \m_\epsilon|^2 \, dx - \frac{1}{\epsilon^2} \int_{\{t\} \times \Omega} \eta^2 (m_{\epsilon 3}^2 - m_{\epsilon 3}^4) \, dx \\
- \int_{\{t\} \times \Omega} \eta^2 m_{\epsilon 3} f_{\epsilon 3}(\m_\epsilon) \, dx - 2 \int_{\{t\} \times \Omega} \eta m_{\epsilon 3} \nabla \eta \cdot \nabla m_{\epsilon 3} \, dx.
\end{multline*}
For $t \in \Lambda_\epsilon$, we have
\[
\begin{split}
\int_{\{t\} \times \Omega} \eta^2 m_{\epsilon 3}^2 |\nabla \m_\epsilon|^2 \, dx 
& \le 2  \int_{\{t\} \times \Omega} \eta^2 m_{\epsilon 3}^2 |\nabla \m_\epsilon - \nabla \m_*|^2 \, dx \\
& \quad + 2\int_{\{t\} \times \Omega} \eta^2 m_{\epsilon 3}^2 |\nabla m_*|^2 \, dx \\
& \le 2\beta \int_{\{t\} \times \Omega} \eta^2 |\nabla \m_\epsilon - \nabla \m_*|^2 \, dx + o_\epsilon(1)
\end{split}
\]
and $m_{\epsilon 3}^2 - m_{\epsilon 3}^4 \ge \frac{3}{4} m_{\epsilon 3}^2$ in $\{t\} \times B_r(x_0)$.
Furthermore,
\[
\int_{\{t\} \times \Omega} \eta^2 m_{\epsilon 3} f_{\epsilon 3}(\m_\epsilon) \, dx \le C\alpha_\epsilon
\]
and
\[
- 2 \int_{\{t\} \times \Omega} \eta m_{\epsilon 3} \nabla \eta \cdot \nabla m_{\epsilon 3} \, dx \le C\epsilon \log \frac{1}{\epsilon}.
\]
Note also that
\[
\int_{(0,\tau) \backslash \Lambda_\epsilon} \int_\Omega \eta^2 e_\epsilon(\m_\epsilon) \, dx \, dt \le C\epsilon^2 \left(\log \frac{1}{\epsilon}\right)^4.
\]
It follows that
\[
\int_0^{\tau} \int_\Omega \eta^2 \left(|\nabla m_{\epsilon 3}|^2 + \frac{m_{\epsilon 3}^2}{\epsilon^2}\right) \, dx \, dt \le C\beta \int_0^{\tau} \int_\Omega \eta^2 |\nabla \m_\epsilon - \nabla \m_*|^2 \, dx \, dt + o_\epsilon(1).
\]

Combined with the previous estimate, this yields
\begin{multline*}
\int_0^{\tau} \int_\Omega \eta^2 \left(|j(m_\epsilon) - j(m_*)|^2 + |\nabla m_{\epsilon 3}|^2 + \frac{m_{\epsilon 3}^2}{\epsilon^2}\right) \, dx \, dt \\
\le C(\beta + \delta) \int_0^{\tau} \int_\Omega \eta^2 |\nabla \m_\epsilon - \nabla \m_*|^2 \, dx \, dt + o_\epsilon(1).
\end{multline*}
Since
\begin{multline*}
\nabla m_\epsilon - \nabla m_* = \frac{m_\epsilon}{|m_\epsilon|} \nabla \sqrt{1 - m_{\epsilon 3}^2} + \frac{im_\epsilon}{|m_\epsilon|^2}(j(m_\epsilon) - j(m_*)) \\
+ i \left(\frac{m_\epsilon}{|m_\epsilon|^2} - m_*\right)j(m_*),
\end{multline*}
we have
\begin{multline*}
\int_{\Lambda_\epsilon} \int_\Omega \eta^2 |\nabla m_\epsilon - \nabla m_*|^2 \, dx \\
\le C\int_0^{\tau} \int_\Omega \eta^2 \left(|j(m_\epsilon) - j(m_*)|^2 + |\nabla m_{\epsilon 3}|^2\right) \, dx \, dt + o_\epsilon(1).
\end{multline*}
For the integral over $(0,\tau) \backslash \Lambda_\epsilon$, we have the same kind of estimate as before. Thus
\begin{multline*}
\int_0^{\tau} \int_\Omega \eta^2 \left(|\nabla \m_\epsilon - \nabla \m_*|^2 + \frac{m_{\epsilon 3}^2}{\epsilon^2}\right) \, dx \, dt \\
\le C(\beta + \delta) \int_0^{\tau} \int_\Omega \eta^2 |\nabla \m_\epsilon - \nabla \m_*|^2 \, dx \, dt + o_\epsilon(1).
\end{multline*}
If $\beta$ and $\delta$ are chosen sufficiently small, then it follows that $\nabla \m_\epsilon \to \nabla \m_*$ and $m_{\epsilon 3}/\epsilon \to 0$ strongly in $L^2((0,\tau) \times B_{r/2}(x_0))$.

Finally, let $\chi \in C_0^\infty(B_{r/2}(x_0))$ and consider again the energy identity from \eqref{conservationenergy}
\begin{multline*}
\frac{d}{dt} \int_{\{t\} \times \Omega} \chi^2 e_\epsilon(\m_\epsilon) \, dx = -\int_{\{t\} \times \Omega} \chi^2 \left|\dd{\m_\epsilon}{t}\right|^2 \, dx \\
- 2 \int_{\{t\} \times \Omega} \chi \nabla \chi \cdot \scp{\dd{\m_\epsilon}{t}}{\nabla \m_\epsilon} \, dx.
\end{multline*}
Integrating in time and using \eqref{time_derivative}, we see that
\[
\osc_{0 \le t \le \tau} \int_{\{t\} \times \Omega} \chi^2 e_\epsilon(\m_\epsilon) \, dx \to 0 \quad \mbox{as } \epsilon \to 0.
\]
Hence we have
\[
\limsup_{\epsilon \to 0} \int_\Omega \chi^2 e_\epsilon(\m_\epsilon(0)) \, dx \le \frac{1}{2} \int_\Omega \chi^2 |\nabla \m_*|^2 \, dx.
\]
That is, we have $\m_\epsilon(0) \to \m_*$ strongly in $H^1(B_{r/4}(x_0))$ and $m_{\epsilon 3}(0)/\epsilon \to 0$ strongly in
$L^2(B_{r/4}(x_0))$---assuming the smallness condition \eqref{small_energy}.

The second statement of the lemma now follows by a standard covering argument.
\end{proof}

\section{The motion law}

We consider the solutions $\m_\epsilon$ of \eqref{LLG} constructed in section \ref{sectionLLG} again.

Let $T > 0$ be the number from Theorem \ref{thm:vortex_motion}.
We consider the sequence $\epsilon_k \searrow 0$ from the same
theorem and we use the same notation again. In particular $\Sigma \subset [0,T]$ is the
accumulation set of all the singular times of $\m_{\epsilon_k}$ for $k \to \infty$.
As $\Sigma$ is finite and the vortex trajectories are continuous, it suffices to verify
the motion law away from $\Sigma$.
Let $\m_* = (m_*,0) : (0,T) \times \Omega \to \mathbb{S}^2$ be the map such
that $m_*(t)$ is the canonical harmonic map with vortices of degree $d_n$ at $a_n(t)$, $n = 1,\ldots,N$. Note that we may assume
$\m_{\epsilon_k} \rightharpoonup \m_*$ locally in $H^1$ away
from the vortices by part (\ref{convergence_part1}) of Lemma \ref{lemma:strong_convergence}
(applied to time-rescaled versions of $\m_\epsilon$).

Consider a time interval $(t_1,t_2)$ with $\Sigma \cap [t_1,t_2] = \emptyset$,
and so short that for some $r \in (0,\rho(a(t_1))]$, we have
$a_n(t) \in B_{r/2}(a_n(t_1))$ for all $t \in [t_1,t_2]$ and $n = 1,\ldots,N$.
Define the measures
\[
\bar{\mu}_\epsilon = \mathcal{L}^3 \restr (\nabla \m_\epsilon \otimes \nabla \m_\epsilon)
\]
on $[t_1,t_2] \times \Omega_r(a(t_1))$. There exists a subsequence such that we have the convergence
$\bar{\mu}_{\epsilon_k} \stackrel{*}{\rightharpoonup} \bar{\mu}$ for a matrix valued Radon measure $\bar{\mu}$ on $[t_1,t_2] \times \Omega_r(a(t_1))$.
For almost every $t \in (t_1,t_2)$, there exist a measure $\mu(t)$ on $\Omega_r(a(t_1))$ such that
for all $\eta \in C_0^0(\Omega_r(a(t_1)))$,
\begin{equation} \label{Lebesgue_point}
\int_{\Omega_r(a(t_1))} \eta \, d\mu(t) = \lim_{h \searrow 0} \frac{1}{2h} \int_{(t - h,t + h) \times \Omega_r(a(t_1))} \eta \, d\bar{\mu}.
\end{equation}
Moreover, for all $\xi \in C_0^0([t_1,t_2] \times \Omega_r(a(t_1)))$, we have
\[
\int_{[t_1,t_2] \times \Omega_r(a(t_1))} \xi \, d\bar{\mu} = \int_{t_1}^{t_2} \int_{\Omega_r(a(t_1))} \xi \, d\mu(t) \, dt.
\]
We also consider the measures
\[
\mu_*(t) = \mathcal{L}^2 \restr (\nabla \m_*(t) \otimes \nabla \m_*(t)).
\]

\begin{lemma} \label{lemma:defect_measures}
There exist finitely many functions $b_1,\ldots,b_P : [t_1,t_2] \to \Omega_r(a(t_1))$ and
$B_1,\ldots,B_P : [t_1,t_2] \to \R^{2 \times 2}$ such that for almost every $t \in [t_1,t_2]$,
\[
\mu(t) = \mu_*(t) + \sum_{p = 1}^P B_p(t) \delta_{b_p(t)}.
\]
\end{lemma}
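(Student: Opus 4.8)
The plan is to obtain the asserted pointwise-in-time structure by applying Lemma~\ref{lemma:strong_convergence}(ii) at almost every time slice $t \in (t_1,t_2)$ and then identifying the resulting single-time limit with the disintegrated slice $\mu(t)$ defined in \eqref{Lebesgue_point}. Fixing $t_0$, I would rescale time by setting $\m_\epsilon^{t_0}(s,x) = \m_\epsilon(t_0 + \alpha_\epsilon s, x)$, which solves \eqref{LLG_rescaled}; since $\nabla_x \m_\epsilon^{t_0}(0,\cdot) = \nabla_x \m_\epsilon(t_0,\cdot)$, the single-time measure of the rescaled family at $s = 0$ is exactly the slice $\mathcal{L}^2 \restr (\nabla \m_\epsilon(t_0) \otimes \nabla \m_\epsilon(t_0))$. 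The uniform energy bound away from the vortices follows essentially as in the proof of Theorem~\ref{thm:vortex_motion}: the total energy is at most $N\pi\logep + C$ while each vortex ball carries at least $\pi\log(r/\epsilon) - C$, so $\sup_t \int_{\Omega_r} e_\epsilon(\m_\epsilon(t)) \le \gamma(r)$ with $\gamma(r)$ independent of $t$, and the required weak $H^1_\loc$ / strong $L^2_\loc$ convergence to $\m_*$ is the hypothesis recorded before \eqref{Lebesgue_point}, coming from Lemma~\ref{lemma:strong_convergence}(i).

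The only nontrivial hypothesis to check is the vanishing kinetic energy \eqref{time_derivative}. For the rescaled family this quantity equals $g_\epsilon(t_0) := \alpha_\epsilon \int_{t_0}^{t_0 + \alpha_\epsilon} \int_\Omega |\partial_t \m_\epsilon|^2 \, dx \, dt$, while the energy inequality \eqref{energy_inequality} on the singularity-free interval gives $\alpha_\epsilon \int_{t_1}^{t_2} \int_\Omega |\partial_t \m_\epsilon|^2 \le C$. A Fubini estimate then yields $\int_{t_1}^{t_2} g_\epsilon(t_0) \, dt_0 \le C \alpha_\epsilon \to 0$, so $g_\epsilon \to 0$ in $L^1(t_1,t_2)$ and hence, after passing to a subsequence, $g_\epsilon(t_0) \to 0$ for a.e. $t_0$. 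At such a $t_0$ all hypotheses of Lemma~\ref{lemma:strong_convergence} hold, so part (ii) produces $P \le P(\gamma(r))$ points $b_p(t_0)$ and matrices $B_p(t_0)$ with $\mathcal{L}^2 \restr (\nabla \m_{\epsilon_k}(t_0) \otimes \nabla \m_{\epsilon_k}(t_0)) \stackrel{*}{\rightharpoonup} \mu_*(t_0) + \sum_p B_p(t_0) \delta_{b_p(t_0)}$ along a further subsequence; crucially $P$ is bounded uniformly in $t_0$, since $\gamma(r)$ is.

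It then remains to identify this single-time limit with the disintegrated slice $\mu(t_0)$. The key point is that Lemma~\ref{lemma:strong_convergence} actually furnishes strong convergence $\nabla \m_{\epsilon_k} \to \nabla \m_*$ on a full space-time neighbourhood $(t_0 - \delta, t_0 + \delta) \times B_{r/2}(x_0)$ of each point $x_0$ satisfying the small-energy condition \eqref{small_energy}, so the space-time defect $\bar\mu - \mathcal{L}^3 \restr (\nabla \m_* \otimes \nabla \m_*)$ is supported away from such neighbourhoods. Testing against products $\phi(s)\eta(x)$ and letting first $k \to \infty$ and then the averaging radius $h \to 0$ in \eqref{Lebesgue_point}, the absolutely continuous parts match (so $\mu(t_0) = \mu_*(t_0)$ off the concentration set), while on the at most $P$ concentration points the slice can only be atomic; this forces $\mu(t_0) = \mu_*(t_0) + \sum_p B_p(t_0) \delta_{b_p(t_0)}$. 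Finally, since $t \mapsto \mu(t)$ is a weak* measurable disintegration of the Radon measure $\bar\mu$ and $\mu(t) - \mu_*(t)$ is supported on at most $P$ points for a.e. $t$, a standard measurable-selection argument provides measurable branches $b_p(t)$, $B_p(t)$, padded with vanishing matrices so that $P$ is the same for every $t$.

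I expect the genuine obstacle to be this matching step: reconciling the time-averaged slice $\mu(t)$ defined by \eqref{Lebesgue_point} with the honest single-time weak* limit. The difficulty is that the subsequence extracted in Lemma~\ref{lemma:strong_convergence}(ii) a priori depends on $t_0$, and the limits $\lim_k$ and $\lim_h$ must be interchanged. This is precisely where the space-time (rather than merely fixed-time) strong convergence supplied by Lemma~\ref{lemma:strong_convergence} is essential, together with the diagonal and subsequence bookkeeping noted in the remark following that lemma.
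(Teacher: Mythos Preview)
Your overall plan and your diagnosis of the main obstacle are both correct, but the resolution you propose for the matching step does not work. The problem is your appeal to ``space-time strong convergence'' from Lemma~\ref{lemma:strong_convergence}: that lemma is stated for the rescaled equation \eqref{LLG_rescaled}, and the interval $(0,\tau)$ on which strong convergence is obtained corresponds, after undoing your rescaling $s \mapsto t_0 + \alpha_\epsilon s$, to the original-time interval $(t_0, t_0 + \alpha_\epsilon \tau)$, which collapses to the single instant $\{t_0\}$ as $\epsilon \to 0$. Hence Lemma~\ref{lemma:strong_convergence} tells you nothing about the space-time defect $\bar{\mu} - \mathcal{L}^3 \restr (\nabla \m_* \otimes \nabla \m_*)$ on any fixed neighbourhood $(t_0 - \delta, t_0 + \delta) \times B_{r/2}(x_0)$, and the identification of the single-time limit (taken along a $t_0$-dependent subsequence) with the disintegrated slice $\mu(t_0)$ remains open.

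The paper closes this gap by reversing the order of operations. Rather than centring the rescaling at the fixed time $t$ and then trying to match the result with $\mu(t)$, one first manufactures times $t_k \to t$ for which the single-time measures $\mathcal{L}^2 \restr (\nabla \m_{\epsilon_k}(t_k) \otimes \nabla \m_{\epsilon_k}(t_k))$ converge to $\mu(t)$ \emph{by construction}, and only then applies Lemma~\ref{lemma:strong_convergence} to $\tilde{\m}_k(s,x) = \m_{\epsilon_k}(t_k + \alpha_{\epsilon_k} s, x)$ to read off the structure of that limit. The $t_k$ come from a diagonal argument: fix a countable dense family $(\eta_\ell)$ in $C_0^0(\Omega_r(a(t_1)))$; use the Lebesgue-point property \eqref{Lebesgue_point} to choose $h_k \searrow 0$ so that the time-averages of $\bar{\mu}$ over $(t - h_k, t + h_k)$ approximate $\int \eta_\ell \, d\mu(t)$ for $\ell \le k$; pass to a further subsequence in $k$ so that the actual $\epsilon_k$-integrals over those windows are close to the averages; and finally use the continuity of $s \mapsto \int \eta_\ell \, \nabla \m_{\epsilon_k}(s) \otimes \nabla \m_{\epsilon_k}(s) \, dx$ to select a specific $t_k$ in the window realising the average. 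The kinetic-energy hypothesis \eqref{time_derivative} for the shifted sequence is verified via the a.e.\ condition $\lim_{h \searrow 0} \limsup_k \alpha_{\epsilon_k} \int_{t-h}^{t+h} \int_\Omega |\partial_t \m_{\epsilon_k}|^2 \, dx \, ds = 0$, which is the natural replacement for your Fubini estimate once the centre is allowed to drift with $k$.
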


\begin{proof}
Fix $t \in (t_1,t_2)$ such that \eqref{Lebesgue_point} holds true and in addition,
\[
\lim_{h \searrow 0} \limsup_{k \to \infty} \left(\alpha_{\epsilon_k} \int_{t - h}^{t + h} \int_{\Omega_r(a(t_1))} \left|\dd{\m_{\epsilon_k}}{t}\right|^2 \, dx \, dt\right) = 0.
\]
We may assume without loss of generality that the latter is true for almost every $t$, for a subsequence with this
property exists.

Let $(\eta_\ell)_{\ell \in \N}$ be a dense sequence in $C_0^0(\Omega_r(a(t_1)))$. Choose a sequence
$h_k \searrow 0$, such that
\[
\left|\frac{1}{2h_k} \int_{[t - h_k,t + h_k] \times \Omega_r(a(t_1))} \eta_\ell \, d\bar{\mu} - \int_{\Omega_r(a(t_1))} \eta_\ell \, d\mu(t)\right| \le \frac{1}{k}
\]
for $\ell = 1,\ldots,k$. We can now replace $(\m_{\epsilon_k})_{k \in \N}$ by a subsequence
(but still use the same notation), such that
\[
\left|\frac{1}{2h_k} \int_{t - h_k}^{t + h_k} \int_{\Omega_r(a(t_1))} \eta_\ell \nabla \m_{\epsilon_k} \otimes \nabla \m_{\epsilon_k} \, dx \, dt - \int_{\Omega_r(a(t_1))} \eta_\ell \, d\mu(t)\right| \le \frac{2}{k}
\]
for $\ell = 1,\ldots,k$. Finally, by the continuity of the functions
\[
t \mapsto \int_{\Omega_r(a(t_1))} \eta_\ell \nabla \m_{\epsilon_k}(t) \otimes \nabla \m_{\epsilon_k}(t) \, dx,
\]
we can find $t_k \to t$ such that
\[
\left|\int_{\Omega_r(a(t_1))} \eta_\ell \nabla \m_{\epsilon_k}(t_k) \otimes \nabla \m_{\epsilon_k}(t_k) \, dx - \int_{\Omega_r(a(t_1))} \eta_\ell \, d\mu(t)\right| \le \frac{3}{k}
\]
for $\ell = 1,\ldots,k$.

Define
\[
\tilde{\m}_k(t,x) = \m_{\epsilon_k} (\alpha_{\epsilon_k}(t - t_k),x).
\]
Then $\tilde{\m}_k$ solves \eqref{LLG_rescaled} for $\epsilon_k$ instead of $\epsilon$, and the sequence
satisfies the counterpart to condition \eqref{time_derivative}. Using Lemma \ref{lemma:strong_convergence},
and passing to a subsequence again if necessary, we conclude that there exist $b_1(t),\ldots,b_P(t) \in \Omega_r(a(t_1))$
and $B_1(t),\ldots,B_P(t) \in \R^{2 \times 2}$, such that
\[
\int_{\Omega_r(a(t_1))} \eta_\ell \, d\mu(t) = \int_{\Omega_r(a(t_1))} \eta_\ell \, d\mu_*(t) + \sum_{p = 1}^P \eta_\ell(b_p(t)) B_p(t)
\]
for all $\ell \in \N$. Furthermore, the number $P$ is bounded by a constant independent of $t$
by the energy estimates away from the vortices. The claim now follows.
\end{proof}

Let $\phi,\psi \in C_0^\infty(\Omega)$. Then we compute, using \eqref{conservationenergy} and \eqref{conservationvorticity}, that
\begin{multline} \label{motion_law_positive_epsilon}
\frac{d}{dt} \int_{\{t\} \times \Omega} \left(\alpha_\epsilon \psi e_\epsilon(\m_\epsilon) + \phi \omega(\m_\epsilon)\right) \, dx \\
\begin{aligned}
& = - \alpha_\epsilon^2 \int_{\{t\} \times \Omega} \psi |\f_\epsilon(\m_\epsilon)|^2 \, dx \\
& \quad - \int_{\{t\} \times \Omega} (\alpha_\epsilon^2 \nabla \psi + \nabla^\perp \phi) \cdot \scp{\f_\epsilon(\m_\epsilon)}{\nabla \m_\epsilon} \,dx \\
& \quad + \alpha_\epsilon \int_{\{t\} \times \Omega} (\nabla \psi - \nabla^\perp \phi) \cdot
\scp{\m_\epsilon \times \f_\epsilon(\m_\epsilon)}{\nabla \m_\epsilon} \,dx
\end{aligned}
\end{multline}
in $[t_1,t_2]$. Note also that
\[
\int_{\{t\} \times \Omega} \nabla^\perp \phi \cdot \scp{\f_\epsilon(\m_\epsilon)}{\nabla \m_\epsilon} \, dx = - \int_{\{t\} \times \Omega} \nabla^\perp \nabla \phi : \nabla \m_\epsilon \otimes \nabla \m_\epsilon \, dx
\]
by an integration by parts.

First consider the identity with $\psi = 0$ and $\phi \in C_0^\infty(\Omega_r(a(t_1)))$. Define the measures
\[
\lambda_\epsilon(t) = \mathcal{L}^2 \restr \omega(\m_\epsilon).
\]
Then the functions
\[
t \mapsto \int_{\Omega_r(a(t_1))} \phi \, d\lambda_\epsilon(t)
\]
are uniformly Lipschitz continuous. It follows that there exists a sequence $\epsilon_k \searrow 0$ such that
$\lambda_{\epsilon_k}(t) \stackrel{*}{\rightharpoonup} \lambda(t)$ weakly* in $(C_0^0(\Omega_r(a(t_1))))^*$ for almost every $t$,
where $\lambda(t)$ is a Radon measure on $\Omega_r(a(t_1))$. Furthermore, we can choose this subsequence such that
the previous statements hold true. Lemma \ref{lemma:defect_measures} then implies that $\lambda(t)$ is of the
form
\[
\lambda(t) = \sum_{p = 1}^P \sigma_p(t) \delta_{b_p(t)}
\]
for certain functions $\sigma_1,\ldots,\sigma_P : [t_1,t_2] \to \R$.
Moreover, using Lemma 3 in \cite{Kurzke-Melcher-Moser-Spirn:11}, we see that
$\sigma_p(t) \in 4\pi\Z$ for every $p$ and almost every $t$.

Still using \eqref{motion_law_positive_epsilon} with $\psi = 0$ and $\phi \in C_0^\infty(\Omega_r(a(t_1)))$, we now obtain
\begin{multline*}
\sum_{p = 1}^P (\sigma_p(t_2) \phi(b_p(t_2)) - \sigma_p(t_1) \phi(b_p(t_1))) \\
= \int_{t_1}^{t_2} \left(\int_\Omega \nabla^\perp \nabla \phi : d\mu_*(t) + \sum_{p = 1}^P \nabla^\perp \nabla \phi(b_p(t)) : B_p(t)\right) \, dt.
\end{multline*}
Moreover,
\[
\int_{t_1}^{t_2} \int_\Omega \nabla^\perp \nabla \phi : d\mu_*(t) \, dt = 0,
\]
since $\m_*(t)$ is a harmonic map. It follows
that
\[
t \mapsto \sum_{p = 1}^P \sigma_p(t) \phi(b_p(t))
\]
belongs to $W^{1,\infty}(t_1,t_2)$ (so in particular it is continuous) and for almost every $t$,
\begin{equation} \label{*}
\frac{d}{dt} \sum_{p = 1}^P \sigma_p(t) \phi(b_p(t)) = \sum_{p = 1}^P \nabla^\perp \nabla \phi(b_p(t)) : B_p(t).
\end{equation}

We want to show that $\lambda(t)$ is constant, inserting test functions $\phi$ with
$\nabla^2 \phi(b_p(t)) = 0$. However, since
the $\sigma_p(t)$ may have a sign, the continuity of the above function
does not immediately rule out nucleation or annihilation of dipoles.
So we have to be a bit careful here.

Define $\#(t) = |\supp \lambda (t)|$. Choose $t_0 \in (t_1,t_2)$ with $\#(t_0) = \max_{t_1 < t < t_2} \#(t)$
(which exists because the function takes integer values).

\begin{lemma} \label{lemma1}
There exists an open interval $I$ with $t_0 \in I$, such that
$\#(t) \equiv \#(t_0)$ in $I$.
\end{lemma}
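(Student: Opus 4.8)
The plan is to reduce the statement to the lower semicontinuity of $\#$ at $t_0$ combined with the maximality of $\#(t_0)$. Since $\#$ takes values in $\N$, once I know that $\#$ is lower semicontinuous at $t_0$ there is an open interval $I \ni t_0$ on which $\#(t) \ge \#(t_0)$; as $t_0$ realizes the maximum, the opposite inequality $\#(t) \le \#(t_0)$ holds throughout $(t_1,t_2)$, and the two together force $\#(t) \equiv \#(t_0)$ on $I$. So the whole content is the lower semicontinuity.

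The first step is to record that $t \mapsto \lambda(t)$ is weak-* continuous. For every $\phi \in C_0^\infty(\Omega_r(a(t_1)))$ the identity \eqref{*} exhibits $t \mapsto \langle \lambda(t),\phi\rangle = \sum_{p=1}^P \sigma_p(t)\phi(b_p(t))$ as the integral of a bounded function, hence as (the restriction to a full-measure set of) a $W^{1,\infty}$ function; after modifying $\lambda$ on a null set, $\lambda(t)$, and with it $\#(t)$, is defined for every $t \in (t_1,t_2)$. Together with the uniform bound $|\lambda(t)|(\Omega_r(a(t_1))) = \sum_p |\sigma_p(t)| \le C$, which comes from the defect-energy estimates behind Lemma \ref{lemma:defect_measures} (these bound $P$ and, through the quantization $\sigma_p(t) \in 4\pi\Z$ and the energy of each bubble, the total mass), this yields $\lambda(t) \stackrel{*}{\rightharpoonup} \lambda(t_0)$ in $(C_0^0(\Omega_r(a(t_1))))^*$ as $t \to t_0$.

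Next I would prove the lower semicontinuity. Given $t_n \to t_0$, I pass to a subsequence realizing $L := \liminf_n \#(t_n)$, so that $\#(t_n) = L$ for large $n$ and each $\lambda(t_n)$ is carried by exactly $L$ points. By the total-variation bound the corresponding masses lie in a bounded subset of $4\pi\Z$, so along a further subsequence both the $L$ positions and the $L$ masses converge, the masses being eventually constant. Passing to the weak-* limit, and discarding any points that run into $\partial\Omega_r(a(t_1))$ where the test functions vanish, I conclude that $\lambda(t_0)$ is carried by at most $L$ distinct points, i.e. $\#(t_0) \le L = \liminf_n \#(t_n)$. This is exactly lower semicontinuity at $t_0$, which closes the argument via the reduction above.

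The main obstacle is ensuring that the weak-* limit of the point measures genuinely lands on at most $L$ points rather than spuriously producing additional mass: this is precisely where the quantization $\sigma_p(t) \in 4\pi\Z$ and the uniform total-variation bound are indispensable, since they prevent masses from escaping to infinity or from being smeared out continuously. It is worth stressing that the sign indefiniteness of the $\sigma_p$—the very source of the dipole nucleation and annihilation flagged above—causes no trouble here: a collision of opposite masses can only lower the count in the limit, which is consistent with, and indeed is the content of, the lower semicontinuity estimate we need.
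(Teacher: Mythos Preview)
Your argument is correct and follows the same strategy as the paper: prove lower semicontinuity of $\#$ at $t_0$ and combine it with the maximality of $\#(t_0)$. The paper's execution is a bit more direct---rather than invoking full weak-$*$ continuity, a total-variation bound, and a subsequence extraction, it simply fixes for each $x_0 \in \supp\lambda(t_0)$ a test function $\phi$ supported in a small ball around $x_0$ with $\phi(x_0)\neq 0$, and uses the already-established continuity of $t\mapsto\int\phi\,d\lambda(t)$ from \eqref{*} to conclude that $\supp\lambda(t)$ meets each such ball for $t$ close to $t_0$.
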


\begin{proof}
Choose $\rho > 0$ such that the balls $B_{2\rho}(x)$ for $x \in \supp \lambda(t_0)$
are contained in $\Omega$ and pairwise disjoint. Fix $x_0 \in \supp \lambda(t_0)$.
Let $\phi \in C_0^\infty(B_\rho(x_0))$ with $\phi(x_0) \not= 0$. Then if $I$ is
sufficiently short, we find
\[
\int_\Omega \phi \, d\lambda(t) \not= 0
\]
for $t \in I$. Hence $B_\rho(x_0)$ intersects $\supp \lambda(t)$.
It follows that $\#(t) \ge \#(t_0)$, and by the choice of $t_0$, this implies
equality.
\end{proof}

We now fix the interval $I$ from Lemma \ref{lemma1} and we set $Q = \#(t_0)$.
We may relabel the points $b_p(t)$ such that
\[
\lambda(t) = \sum_{p = 1}^Q \sigma_p(t) \delta_{b_p(t)}.
\]
The continuity of $\lambda(t)$ and the fact that $\sigma_p(t) \in 4\pi\Z \backslash \{0\}$
then imply that $b_p(t)$ is continuous in $I$ (if labeled appropriately)
for $p = 1,\ldots,Q$.

\begin{lemma} \label{lemma3}
For almost all $t \in I$ and all $\phi \in C_0^\infty(\Omega_r(a(t_1)))$,
\[
\sum_{p = Q + 1}^P \nabla^\perp \nabla \phi(b_p(t)) : B_p(t) = 0.
\]
\end{lemma}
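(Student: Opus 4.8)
The plan is to exploit the rigidity of the charged points. By Lemma~\ref{lemma1} their number stays equal to $Q$ on $I$, so they are pairwise distinct (a collision would make $\#(t)<Q$), they keep a positive mutual distance on every compact subinterval, and they move continuously; moreover $\lambda(t)$ is continuous and $\sigma_p(t)\in 4\pi\Z\setminus\{0\}$, so each $\sigma_p$ is constant on $I$. The idea is to feed into \eqref{motion_law_positive_epsilon} (with $\psi=0$), and hence into its limiting form \eqref{*}, test functions that \emph{vanish in a moving neighbourhood of each charged trajectory}. For such a function the quantity $\sum_{p=1}^P\sigma_p(t)\phi(b_p(t))$ is identically zero --- the charged terms because $\phi$ vanishes there, the neutral ones because $\sigma_p\equiv 0$ for $p>Q$ --- so the left-hand side of \eqref{*} has vanishing time derivative, while simultaneously $\nabla^2\phi(b_p(t))=0$ at every charged point kills the charged contributions on the right. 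What remains is exactly $\sum_{p=Q+1}^P\nabla^\perp\nabla\phi(b_p(t)):B_p(t)=0$, provided the test function still agrees with the prescribed $\phi$ to second order at the neutral points.

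Concretely, I would fix $\phi\in C_0^\infty(\Omega_r(a(t_1)))$, approximate the continuous curves $b_1,\dots,b_Q$ by smooth curves $\tilde{b}_1,\dots,\tilde{b}_Q$ with $\sup_I|\tilde{b}_p-b_p|$ as small as desired, fix $\zeta\in C_0^\infty(B_1(0))$ with $\zeta\equiv 1$ on $B_{1/2}(0)$, and for $j\in\N$ set
\[
\Phi_j(t,x)=\phi(x)\Bigl(1-\sum_{p=1}^{Q}\zeta\bigl(j(x-\tilde{b}_p(t))\bigr)\Bigr).
\]
Since the charged points keep a positive mutual distance, for large $j$ these cut-offs are disjoint and $\Phi_j$ is smooth in $(t,x)$, vanishing in a space-time neighbourhood of each charged trajectory; thus its value, gradient, Hessian and time derivative all vanish along the charged points. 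I would then use the time-dependent analogue of \eqref{motion_law_positive_epsilon} with $\psi=0$ and $\phi=\Phi_j$. This carries one extra term $\int\partial_t\Phi_j\,\omega(\m_\e)$, which is harmless: in the limit it becomes $\sum_p\sigma_p(t)\partial_t\Phi_j(t,b_p(t))$, and every summand vanishes, by $\sigma_p\equiv0$ at the neutral points and by $\partial_t\Phi_j\equiv 0$ at the charged ones. Passing to the limit exactly as in the derivation of \eqref{*} and of Lemma~\ref{lemma:defect_measures}, I obtain $\sum_{p=Q+1}^P\nabla^\perp\nabla\Phi_j(t,b_p(t)):B_p(t)=0$ for almost every $t\in I$.

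It remains to strip off the modification near the neutral points. On the set $E_j=\{t\in I:\dist(b_p(t),\{b_1(t),\dots,b_Q(t)\})>2/j\ \text{for all }p>Q\}$ every neutral point lies outside all cut-offs, so $\nabla^2\Phi_j(t,b_p(t))=\nabla^2\phi(b_p(t))$ there for $p>Q$, whence $\sum_{p=Q+1}^P\nabla^\perp\nabla\phi(b_p(t)):B_p(t)=0$ on $E_j$. For almost every $t$ the points in the decomposition of Lemma~\ref{lemma:defect_measures} are pairwise distinct, so each neutral point is at positive distance from the charged ones; hence $\bigcup_j E_j$ has full measure and the identity holds for almost every $t$ and the fixed $\phi$. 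Running this over a countable family of $\phi$ dense in the $C^2$-topology, and using that $\phi\mapsto\sum_{p>Q}\nabla^\perp\nabla\phi(b_p(t)):B_p(t)$ depends continuously on the second derivatives of $\phi$ at the finitely many fixed points $b_p(t)$, upgrades this to all $\phi$ simultaneously for almost every $t$.

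The hard part is precisely this decoupling of the neutral points from the charged ones. We only control the charged trajectories (continuous, with constant integer weights); the neutral points may move erratically and could, a priori, sit on the spatial curve swept out by a charged point at another time. A time-independent test function that is constant near the charged trajectories would force the neutral points away from that entire curve, a condition we cannot guarantee. Letting the cut-offs follow the charged points reduces the requirement to separation from the instantaneous charged positions, which holds for almost every $t$; this is the single place where the proof genuinely needs time-dependent test functions, and it is the crux of the argument.
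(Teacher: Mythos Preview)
Your approach is correct, but substantially more elaborate than the paper's, and your central claim in the last paragraph---that time-dependent test functions are \emph{genuinely needed} and that this is ``the crux of the argument''---is mistaken.

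The paper's proof is essentially two sentences. One first observes that it suffices to treat $\phi$ with $b_p(t)\notin\supp\phi$ for $p=1,\dots,Q$: the quantity $\sum_{p>Q}\nabla^\perp\nabla\phi(b_p(t)):B_p(t)$ depends only on the Hessian of $\phi$ at the neutral points, and (as you also assume) these are distinct from the charged points for almost every $t$. For such a time-\emph{independent} $\phi$, the continuity of the charged curves $b_1,\dots,b_Q$ on $I$ gives $\phi(b_p(s))=0$ for all $s$ in a neighbourhood of $t$ and all $p\le Q$; hence $s\mapsto\sum_{p=1}^Q\sigma_p(s)\phi(b_p(s))$ is constant near $t$, and \eqref{*} yields the claim directly.

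Your worry was that ``a time-independent test function that is constant near the charged trajectories would force the neutral points away from that entire curve''. This overstates what is required. One does not need $\phi$ to vanish along the whole spatial trace swept out by a charged point over $I$; one needs $\phi$ to vanish only on a spatial neighbourhood of the charged positions \emph{at the single time $t$}. The time-interval on which $F_\phi$ vanishes is then produced by the already-established continuity of the charged $b_p$, not by enlarging the support condition on $\phi$. A neutral point $b_p(t)$ (for $p>Q$) is therefore free to lie on the trace of a charged trajectory at some other time; the only separation needed is the instantaneous one at time $t$, exactly the condition you invoke for your sets $E_j$. Your smoothed trajectories, moving cut-offs, and the bookkeeping of the extra $\partial_t\Phi_j\,\omega(\m_\e)$ term are all sound, but they solve a difficulty that is not actually present.
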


\begin{proof}
It suffices to prove this for a $\phi$ with $b_p(t) \not\in \supp \phi$ for $p = 1,\ldots,Q$.
But in this case, the function
\[
t \mapsto \sum_{p = 1}^Q \sigma_p(t) \phi(b_p(t))
\]
is constant in a neighborhood of $t$. The claim then follow from \eqref{*}.
\end{proof}

\begin{lemma} \label{lemma4}
In $(t_1,t_2)$,
\[
\frac{d}{dt} \lambda(t) = 0.
\]
\end{lemma}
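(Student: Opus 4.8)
The plan is to show that on the interval $I$ furnished by Lemma \ref{lemma1} the $Q$ genuine atoms of $\lambda$ neither move nor change their weights, so that $\lambda$ is locally constant there, and then to promote this local constancy to all of $(t_1,t_2)$ by a continuity-and-connectedness argument. Recall that on $I$ we have $\lambda(t)=\sum_{p=1}^Q\sigma_p(t)\delta_{b_p(t)}$ with $\sigma_p(t)\in 4\pi\Z\setminus\{0\}$ and with $b_1,\ldots,b_Q$ continuous and pairwise distinct, while the remaining points $b_{Q+1},\ldots,b_P$ carry no vorticity defect. Since $\lambda(t)$ is completely determined by the pairs $(\sigma_p(t),b_p(t))$, proving $\frac{d}{dt}\lambda(t)=0$ on $I$ amounts precisely to showing that each $\sigma_p$ and each $b_p$ is constant.

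First I would fix a time $t^\ast\in I$ at which \eqref{*} holds and exploit the freedom in the test function. For the weights, choose $\phi$ equal to $1$ on a neighborhood of $b_p(t^\ast)$ and equal to $0$ near every other atom; by continuity of the $b_{p'}$ this persists for $t$ near $t^\ast$, so $\nabla^2\phi(b_{p'}(t))=0$ for all $p'\le Q$, while Lemma \ref{lemma3} kills the contribution of the indices $p'>Q$. Hence the right-hand side of \eqref{*} vanishes and $\int\phi\,d\lambda(t)=\sigma_p(t)$ is constant near $t^\ast$; as this holds near every point of $I$ and $t\mapsto\sum_{p}\sigma_p\phi(b_p)$ lies in $W^{1,\infty}$, each $\sigma_p$ is constant on $I$. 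For the positions, I would instead take $\phi$ affine, $\phi(x)=\langle v,x\rangle$, on a neighborhood of $b_p(t^\ast)$ and constant near the other atoms; again $\nabla^2\phi$ vanishes at all atoms and Lemma \ref{lemma3} disposes of the spurious points, so \eqref{*} forces $t\mapsto\int\phi\,d\lambda(t)=\sigma_p\langle v,b_p(t)\rangle+\mathrm{const}$ to be constant. Since $\sigma_p$ is a nonzero constant, $\langle v,b_p(t)\rangle$ is constant, and letting $v$ range over a basis shows $b_p$ is constant on $I$. Thus $\lambda\equiv\lambda(t_0)$ on $I$.

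To globalize, I would first note that \eqref{*} together with the uniform mass and Lipschitz bounds makes $t\mapsto\lambda(t)$ weakly-$\ast$ continuous after modification on a null set, so that $\lambda$ has a genuine weak-$\ast$ continuous representative on $(t_1,t_2)$. Consider the set $S=\{t\in(t_1,t_2):\lambda(t)=\lambda(t_0)\}$. It is closed by weak-$\ast$ continuity and nonempty since $t_0\in S\supseteq I$. It is also open: if $t^\ast\in S$ then $\lambda(t^\ast)=\lambda(t_0)$ has exactly $Q$ atoms, so $\#(t^\ast)=Q$ is maximal and Lemma \ref{lemma1} applies at $t^\ast$; repeating the local argument of the previous paragraph around $t^\ast$ shows $\lambda$ is constant, hence equal to $\lambda(t^\ast)=\lambda(t_0)$, on a whole neighborhood of $t^\ast$. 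Since $(t_1,t_2)$ is connected, $S=(t_1,t_2)$, and therefore $\frac{d}{dt}\lambda(t)=0$ throughout.

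The main obstacle is exactly the point flagged before Lemma \ref{lemma1}: because the weights $\sigma_p$ may have opposite signs, a $+4\pi$ atom and a $-4\pi$ atom can approach and annihilate (or nucleate) while $\int\phi\,d\lambda(t)$ varies continuously, so continuity of the tested measure cannot by itself rule out such events. The entire device of restricting to the maximal cardinality $Q$ is what overcomes this: at a maximal point the configuration is rigid by Lemma \ref{lemma1}, the purely stress-energetic points contribute nothing by Lemma \ref{lemma3}, and affine test functions then pin down both the weights and the positions; the open-closed argument finally propagates this rigidity across the possible lower-cardinality times, which a posteriori never occur.
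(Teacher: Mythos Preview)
Your proposal is correct and follows essentially the same approach as the paper. The paper's proof is extremely terse---it inserts test functions with $\nabla^2\phi(b_p(t))=0$ into \eqref{*}, invokes a ``density argument'' to conclude $\lambda$ is constant on $I$, and then says one ``can extend the arguments beyond the end points of $I$''---and your write-up simply makes each of these steps explicit: the constant and locally affine test functions are precisely the density argument, and your open--closed connectedness argument (reapplying Lemma~\ref{lemma1} at any $t^\ast$ with $\#(t^\ast)=Q$) is exactly the extension mechanism the paper alludes to.
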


\begin{proof}
If we insert functions $\phi$ in \eqref{*} with $\nabla^2 \phi(b_p(t)) = 0$ for $p = 1,\ldots,Q$
at a time $t \in I$, then by density arguments we obtain the required identity in $I$. But then the curves
$b_1(t),\ldots,b_Q(t)$ are constant in $I$, and we can extend the arguments beyond the end points
of $I$ (unless they coincide with $t_1$ or $t_2$).
\end{proof}

The remaining arguments are similar to \cite{Kurzke-Melcher-Moser-Spirn:11}.
Now consider \eqref{motion_law_positive_epsilon} again and insert $\phi,\psi \in C_0^\infty(\Omega)$ with $\nabla^\perp \phi = \nabla \psi$ and $\nabla^2 \psi = 0$ in $\Omega \backslash \Omega_r(a(t_1))$.
Then we obtain
\begin{multline*}
\pi \sum_{n = 1}^N \left(\alpha_0 \psi(a_n(t_2)) + 4q_n(t_2) \phi(a_n(t_2)) - \alpha_0 \psi(a_n(t_1)) - 4q_n(t_1) \phi(a_n(t_1))\right) \\
= \int_{t_1}^{t_2} \left(\int_\Omega \nabla^\perp \nabla \phi : d\mu_*(t) + \sum_{p = 1}^P \nabla^\perp \nabla \phi(b_p(t)) : B_p(t)\right) \, dt
\end{multline*}
for certain functions $q_1,\ldots,q_N : [t_1,t_2] \to \frac{1}{2} + \Z$, using Lemma 3 in \cite{Kurzke-Melcher-Moser-Spirn:11}
again.

If $\phi = \psi = 0$ in in $\Omega \backslash \Omega_r(a(t_1))$, then we have the identity
\[
\int_{t_1}^{t_2} \int_\Omega \nabla^\perp \nabla \phi : d\mu_*(t) \, dt = 0,
\]
again. It follows
that
\[
\sum_{p = 1}^P \int_{t_1}^{t_2} \nabla^\perp \nabla \phi(b_p(t)) : B_p(t) \, dt = 0
\]
for all $\phi,\psi$ with this property. But then the last identity must be
true for all $\phi,\psi \in C_0^\infty(\Omega)$. Hence we have in fact
\begin{multline*}
\pi \sum_{n = 1}^N \left(\alpha_0 \psi(a_n(t_2)) + 4q_n(t_2) \phi(a_n(t_2)) - \alpha_0 \psi(a_n(t_1)) - 4q_n(t_1) \phi(a_n(t_1))\right) \\
= \int_{t_1}^{t_2} \int_\Omega \nabla^\perp \nabla \phi : d\mu_*(t) \, dt.
\end{multline*}
for all $\phi, \psi \in C_0^\infty(\Omega)$ with $\nabla^\perp \phi = \nabla \psi$
in a neighborhood of the vortices. From this we see that each $q_n$ is locally constant away from $\Sigma$. Using \eqref{gradientRenormalized},
we derive the motion law
\[
(\alpha_0 + 4q_n i)\pi \dot{a}_n = - \dd{}{a_n} W(a,d), \quad n = 1,\ldots,N.
\]
Thus we have proved Theorem \ref{vortexmotionlawLLG}.

\section{Comments on the complex valued case}  \label{mixedglsection}

As remarked in the introduction, a similar analysis can be performed on the complex Ginzburg-Landau equation of mixed-type dynamics,
\begin{equation} \label{mixedgl2}
(\alpha_\e + i) \dd{u_\e}{t} = \Delta u_\e + {1 \over \e^2} u_\e \LC 1 - |u_\e|^2 \RC
\end{equation}
on $\Omega \subset \R^2$ with $\alpha_\e \log \frac{1}{\epsilon} \to \alpha_0$. The dynamics of vortices were established rigorously by Miot \cite{Miot:08} on $\R^2$ and by the authors \cite{Kurzke-Melcher-Moser-Spirn:08} in bounded domains.  In both papers the authors make use 
of compactness results for the Ginzburg-Landau energy $e_{\mathrm{gl}}(u_\e)$ and the Jacobian $J(u_\e)$, the associated conservation laws 
\begin{align}
\dd{}{t} \LC \frac 12 \LC |u|^2 - 1\RC \RC & = \operatorname{div} j(u_\e) -  \alpha_\e  \LC i u , \dd{u}{t} \RC \label{mass_conservation} \\
\dd{}{t} J(u) & = \curl \operatorname{div} \LC \nabla u \otimes \nabla u \RC + \alpha_\e \LC \dd{u}{t} , \nabla u \RC \label{Jacobian_conservation}\\
\dd{}{t} e_\e(u) & = - \alpha_\e \LV \dd{u}{t} \RV^2 + \operatorname{div} \LC \nabla u , \dd{u}{t} \RC,  \label{energy_conservation}
\end{align}
and well-preparedness of the initial data to pin vortices to the ODE.  

In order to prove Theorem~\ref{vortexmotionlawmixedgl} we essentially follow the arguments from the proof of Theorem~\ref{vortexmotionlawLLG}; however, there is one important improvement. Solutions of equation \eqref{mixedgl2} remain smooth for all times, and thus there is no bubbling and no
discontinuity in the coefficients of the ODE. Furthermore, due to the compactness results of Jerrard-Soner \cite{Jerrard-Soner:02} and continuity of the Jacobian, one finds that $J(u_\e) \to J(u_*)$. In the LLG case, on the other hand, we cannot prove $\omega(\m_\e) \to \omega(\m_*)$, as
there may be additional Dirac masses induced by bubbling. Consequently, equation \eqref{mixedgl2} permits a few shortcuts in the proof. Otherwise,
the arguments remain the same and we do not repeat them.

The procedure follows along the lines of the proof of Theorem~\ref{vortexmotionlawLLG}.  First we establish strong convergence of the stress energy tensor ${j(u_\e) \over |u_\e|} \otimes  {j(u_\e) \over |u_\e|}$ to $j(u_*) \otimes j(u_*)$ outside the vortex cores and a finite number of points.
The proof is essentially the same as for Lemma \ref{lemma:strong_convergence} and uses identity \eqref{mass_conservation}.
Combining the differential identities \eqref{Jacobian_conservation} and \eqref{energy_conservation}, in a similar way as in the proof of Theorem~\ref{vortexmotionlawLLG}, we show that the defect measure does not affect the motion of the vortices, and the motion law follows.

%
%
%
%
%
%
%
%

\end{document}